\newcommand{\s}{\vspace{0.3cm}}
\newtheorem{theo}{Theorem}
\newtheorem{prop}{Proposition}
\newtheorem{coro}{Corollary}
\newtheorem{lemm}{Lemma}
\theoremstyle{remark}
\newtheorem{rema}{\bf Remark}
\begin{document}

\title{Uniformizations of stable $(\gamma,n)$-gonal Riemann surfaces}

\author{Ruben A. Hidalgo}
\address{Departamento de Matem\'atica y Estad\'{\i}stica, Universidad de La Frontera. Casilla 54-D, 4780000 Temuco, Chile}
\email{ruben.hidalgo@ufrontera.cl}

\thanks{Partially supported by Project Fondecyt 1150003 and Project Anillo ACT1415 PIA CONICYT}

\maketitle

\begin{center}
\dedicatory{To the memory of Alexander Vasil'ev}
\end{center}

%%%%%%%%%%%%%%%%%
%%%%%%%%%%%%%%%%%

\begin{abstract}
A $(\gamma,n)$-gonal pair is a pair $(S,f)$, where $S$ is a closed Riemann surface and $f:S \to R$ is a degree $n$ holomorphic map onto a closed Riemann surface $R$ of genus $\gamma$. If the signature of $(S,f)$ is of hyperbolic type, then there is pair $(\Gamma,G)$, called an uniformization of $(S,f)$, where $G$ is a Fuchsian group acting on the unit disc ${\mathbb D}$ containing $\Gamma$ as an index $n$ subgroup, so that $f$ is induced by the inclusion of $\Gamma <G$. The uniformization is uniquely determined by $(S,f)$, up to conjugation by holomorphic automorphisms of ${\mathbb D}$, and it permits 
to provide natural complex orbifold structures on the Hurwitz spaces parametrizing (twisted) isomorphic classes of pairs topologically equivalent to $(S,f)$. In order to produce certain compactifications of these Hurwitz spaces, one needs to consider the so called 
stable $(\gamma,n)$-gonal pairs, which are natural geometrical deformations of $(\gamma,n)$-gonal pairs. Due to the above, it seems interesting to search for 
uniformizations of stable $(\gamma,n)$-gonal pairs, in terms of certain class of Kleinian groups. In this paper we review such uniformizations by using
noded Fuchsian groups, which are (geometric) limits of quasiconformal deformations of Fuchsian groups, and which provide uniformizations of stable Riemann orbifolds. These uniformizations permit to obtain a compactification of the Hurwitz spaces with a complex orbifold structure, these being quotients of the augmented Teichm\"uller space of $G$ by a suitable finite index subgroup of its modular group.

\end{abstract}

%%%%%%%%%%%%%%%%%
%%%%%%%%%%%%%%%%%
\section{Introduction}
A {\it $(\gamma,n)$-gonal pair} is a pair $(S,f)$, where $S$ is a closed Riemann surface and $f:S \to R$ is a degree $n \geq 2$ holomorphic branched cover onto a closed Riemann surface $R$ of genus $\gamma$; in this case, we say that $S$ is  {\it $(\gamma,n)$-gonal}. A $(0,n)$-gonal pair is usually called 
an {\it $n$-gonal pair}. In the case that the branched cover $f$ is {\it simple}, that is, each branch value of $f$ has exactly $n-1$ preimages, we talk of 
a {\it simple $(\gamma,n)$-gonal pair}.

There are different notion of equivalences between $(\gamma,n)$-gonal pairs. We are interested in three of them. If 
$(S_{1},f_{1}:S_{1} \to R_{1})$ and $(S_{2},f_{2}:S_{2} \to R_{2})$ are $(\gamma,n)$-gonal pairs, then we say that they are: (i) {\it topologically equivalent} if there are orientation preserving homeomorphisms $\phi:S_{1} \to S_{2}$ and  $\psi:R_{1} \to R_{2}$ so that $f_{2} \circ \phi =\psi \circ  f_{1}$, (ii) {\it twisted isomorphic} if we may assume $\phi$ and $\psi$ to be isomorphisms, and (iii) {\it isomorphic} if $R_{1}=R_{2}$, $\phi$ is an isomorphism and $\psi$ is the identity map. 

Associated to a $(\gamma,n)$-gonal pair $(S,f)$ are the Hurwitz spaces ${\mathcal H}_{0}(S,f)$ and ${\mathcal H}(S,f)$, consisting respectively of the isomorphic classes and of the twisted isomorphic classes of the $(\gamma,n)$-gonal pairs which are topologically equivalent to $(S,f)$. If $(S',f')$ is topologically equivalent to $(S,f)$, then there are natural bijections between (i) ${\mathcal H}_{0}(S,f)$  and ${\mathcal H}_{0}(S',f')$ and (ii) ${\mathcal H}(S,f)$ and ${\mathcal H}(S',f')$.

In 1873, Clebsch \cite{Clebsch} proved that any two simple $n$-gonal pairs are topologically equivalent, in particular, for $(S,f)$ a simple $n$-gonal pair, the space ${\mathcal H}(S,f)$ is the classical Hurwitz space parametrizing twisted isomorphic classes of  degree $n$ simple covers of $\widehat{\mathbb C}$. 
In 1891, Hurwitz \cite{Hurwitz} proved that ${\mathcal H}(S,f)$ has the structure of a complex manifold of dimension $r-3$, where $S$ has genus $g$ and  $r=2g+2n-2 \geq 3$. So, by Clebsch's result, ${\mathcal H}(S,f)$ is irreducible. As every Riemann surface of genus $g$ admits a degree $n \geq g+1$ simple branched cover (by Riemann-Roch's theorem), Severi \cite{Severi} used the above facts to prove that the moduli space ${\mathcal M}_{g}$, of closed Riemann surfaces of genus $g$, is irreducible. For the case of not necessarily simple $n$-gonal pairs $(S,f)$, Fried \cite{Fried} proved that ${\mathcal H}(S,f)$ also has the structure of a connected complex manifold of dimension $r-3$.

In the general case of an $n$-gonal pair $(S,f)$ (i.e., not necessarily simple), in \cite{Natanzon1,Natanzon2} Natanzon  constructed an uniformizing pair $(\Gamma,G)$, where $G$ is a Fuchsian group isomorphic to a free group of rank $|B_{f}|-1$, where
$B_{f}$ is the set of branched values of $f$ (assuming its cardinality is at least $3$), and $\Gamma$ is a suitable finite index subgroup of it (of index equal to the degree of $f$). Using such type of uniformizations, he was able to obtain that ${\mathcal H}(S,f)$ is homeomorphic to the a quotient ${\mathbb R}^{m}/M$, where $M$ is a certain discrete group (in this case, ${\mathbb R}^{m}$ is the real structure of the Teichm\"uller space of the Riemann sphere punctured at $|B_{f}|$ points).

Let us now consider a $(\gamma,n)$-gonal pair $(S,f)$. In Section \ref{Sec:uniformizationsofpairs} we define for such a pair to be of {\it hyperbolic type} and, in such a case, we associate to it a pair $(\Gamma,G)$, called a {\it uniformization of $(S,f)$}, where $G$ is a Fuchsian group acting on the unit disc ${\mathbb D}$ containing $\Gamma$ as an index $n$ subgroup, so that there are isomorphisms $\phi:S \to {\mathbb D}/\Gamma$ and  $\psi: R \to {\mathbb D}/G$ with $\pi=\psi f \phi^{-1}$ being a branched covering induced by the inclusion of $\Gamma$ inside $G$. The uniformizing pair is uniquely determined up to conjugation by holomorphic automorphisms of ${\mathbb D}$. Using such an uniformization pair, it is possible to obtain the following fact that generalizes Natanzon's above.

\s
\noindent
\begin{theo}\label{main1}
Let $(S,f)$ be a $(\gamma,n)$-gonal pair of hyperbolic type and $(\Gamma,G)$ be a unifomizing pair of it. Then 
(i) ${\mathcal H}_{0}(S,f)$ is isomorphic to the Teichm\"uller space $T({\mathbb D},G)$ of the Riemann orbifold ${\mathbb D}/G$, this being a finite dimensional simply-connected complex manifold, 
and (ii) ${\mathcal H}(S,f)$ is a complex orbifold, being a quotient $T({\mathbb D},G)/M(\Gamma,G)$, where $M(\Gamma,G)$ is a suitable finite index subgroup of its holomorphic automorphisms. 
\end{theo}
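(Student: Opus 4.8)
The plan is to make the uniformization pair do all the work, turning the Hurwitz spaces into quotients of the Teichm\"uller space of the Fuchsian group $G$. Fix the uniformizing pair $(\Gamma,G)$ of $(S,f)$ on ${\mathbb D}$, so that $\Gamma<G$ has index $n$, $R={\mathbb D}/G$ is the base orbifold (closed, of hyperbolic signature), $S={\mathbb D}/\Gamma$, and $f$ is induced by the inclusion. Represent $T({\mathbb D},G)$ in the usual Ahlfors--Bers way, by quasiconformal self-homeomorphisms $w$ of ${\mathbb D}$ with $wGw^{-1}$ again Fuchsian, two being equivalent when they agree on $\partial{\mathbb D}$ after a M\"obius conjugacy of $G$. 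First I would define a map $\Theta\colon T({\mathbb D},G)\to {\mathcal H}_{0}(S,f)$: given $[w]$, set $G_{w}=wGw^{-1}\supset \Gamma_{w}=w\Gamma w^{-1}$, let $R_{w}={\mathbb D}/G_{w}$, $S_{w}={\mathbb D}/\Gamma_{w}$, and let $f_{w}\colon S_{w}\to R_{w}$ be induced by the inclusion; since $w$ descends to an orientation preserving homeomorphism $R\to R_{w}$ lifting to one between $S$ and $S_{w}$ compatible with the covers, $(S_{w},f_{w})$ is topologically equivalent to $(S,f)$, and $\Theta([w]):=[(S_{w},f_{w})]$.

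Next I would prove $\Theta$ is a bijection. For surjectivity: any $(S',f')$ topologically equivalent to $(S,f)$ has the same branch data, hence is again of hyperbolic type and carries a uniformizing pair $(\Gamma',G')$; the homeomorphism $R\to R'$ lifts to an orientation preserving self-homeomorphism of ${\mathbb D}$ conjugating $G$ to $G'$ and, by the way the uniformization is built, $\Gamma$ to a conjugate of $\Gamma'$; since any orientation preserving homeomorphism of ${\mathbb D}$ conjugating one Fuchsian group into another is isotopic, through the group action, to a quasiconformal one, this produces a point of $T({\mathbb D},G)$ over $[(S',f')]$. For injectivity: if $\Theta([w_{1}])=\Theta([w_{2}])$, an isomorphism of the corresponding (marked) pairs lifts to a M\"obius transformation of ${\mathbb D}$ conjugating $G_{w_{1}}$ to $G_{w_{2}}$ compatibly with the markings, which is exactly the statement that $w_{2}w_{1}^{-1}$ is trivial in $T({\mathbb D},G)$. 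This gives part (i) as a bijection; the complex-analytic assertions are then inherited from the classical theory: $T({\mathbb D},G)$, being the Teichm\"uller space of the closed hyperbolic orbifold ${\mathbb D}/G$, is a finite dimensional complex manifold and, via the Bers embedding, is biholomorphic to a bounded contractible domain in some ${\mathbb C}^{N}$, hence simply connected. One also checks this structure agrees with the intrinsic one on the Hurwitz space coming from local holomorphic coordinates given by the positions of the branch values.

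For part (ii), let $M(G)$ denote the modular group of ${\mathbb D}/G$, acting properly discontinuously and biholomorphically on $T({\mathbb D},G)$, and let $M(\Gamma,G)\le M(G)$ be the subgroup of isotopy classes of self-homeomorphisms of ${\mathbb D}/G$ that lift, through $f$, to self-homeomorphisms of ${\mathbb D}/\Gamma$; equivalently, those represented by automorphisms of $G$ that carry $\Gamma$ to a conjugate of itself. Under the bijection $\Theta$, two points of $T({\mathbb D},G)$ yield twisted isomorphic pairs precisely when they lie in the same $M(\Gamma,G)$-orbit: a liftable mapping class produces the required isomorphisms $\phi$ and $\psi$, and conversely any twisted isomorphism $(\phi,\psi)$ lifts to ${\mathbb D}$, conjugates the two Fuchsian pairs, and so defines a liftable mapping class carrying one point to the other. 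Hence ${\mathcal H}(S,f)=T({\mathbb D},G)/M(\Gamma,G)$ as a set; since $M(\Gamma,G)$, as a subgroup of $M(G)$, still acts properly discontinuously by biholomorphisms on the simply connected finite dimensional complex manifold $T({\mathbb D},G)$, this quotient is a complex orbifold. To see that $M(\Gamma,G)$ has finite index in $M(G)$, I would have $M(G)$ act on the finite set of conjugacy classes of index $n$ subgroups of $G$ whose quotient is topologically of the type of $S$ with the prescribed covering behaviour; $M(\Gamma,G)$ is, up to finite index, the stabilizer of the class of $\Gamma$, whose index is the size of a finite orbit.

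I expect two points to carry the real weight. The first is establishing that $\Theta$ is a genuine bijection onto the Hurwitz space with its intended equivalence, i.e.\ that \emph{every} topological equivalence of $(\gamma,n)$-gonal pairs is realized by a quasiconformal deformation of the uniformization and that the equivalence relations match on the nose; this rests on the lifting of homeomorphisms of hyperbolic orbifolds to (isotopically) quasiconformal maps of ${\mathbb D}$ and on the uniqueness of the uniformizing pair, both of which must be handled while keeping track of compatibility with the cover $f$. The second is the finiteness $[M(G):M(\Gamma,G)]<\infty$, which is where one genuinely needs that $\Gamma$ is a \emph{fixed} conjugacy class of finite index subgroup and that only finitely many competitors occur. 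Once these are in place, the finite dimensionality, simple connectivity and orbifold structure are immediate consequences of the Ahlfors--Bers theory of Teichm\"uller and modular groups of Fuchsian groups, and Theorem~\ref{main1} follows.
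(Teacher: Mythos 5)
Your proposal follows essentially the same route as the paper: it identifies points of the Hurwitz spaces with quasiconformal deformations of the fixed uniformizing pair $(\Gamma,G)$, which is exactly the content of the paper's Lemma~\ref{lema2} together with the measurable Riemann mapping theorem, and then reads off ${\mathcal H}_{0}(S,f)\cong T({\mathbb D},G)$ and ${\mathcal H}(S,f)\cong T({\mathbb D},G)/M(\Gamma,G)$ with the complex-analytic statements imported from classical Ahlfors--Bers theory. The only difference is one of detail, in your favor: you sketch why $M(\Gamma,G)$ has finite index in $M({\mathbb D},G)$ (via the finitely many conjugacy classes of index $n$ subgroups of $G$ of the given topological type), a fact the paper only asserts.
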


\s

In \cite{BCI,BC,GWW} there is some study of $(\gamma,n)$-gonal pairs and their groups of automorphisms when $n$ is prime integer and the $n$-gonal map is a regular branched cover.

In the literature, compactifications of Hurwitz spaces of simple $n$-gonal pairs has been obtained by adding the so called stable $n$-gonal pairs (also called admissible ones by some authors; see for instance \cite{H-M}), which are certain geometric degenerations of simple $n$-gonal pairs. Next, we proceed to recall such king of degenerations in the more general case (i.e., it might be either $\gamma \geq 0$ or non-simple situation).

Let $(S,f:S \to R)$ be a fixed $(\gamma,n)$-gonal pair of hyperbolic type. If $\gamma=0$, then we also assume  that its branch value set  $B_{f}$ has cardinality at least $4$; otherwise there is no possible degeneration to be done (if $B_{f}$ has cardinality $3$, then $(S,f)$ is a Belyi pair which is definable over $\overline{\mathbb Q}$ as a consequence of Belyi's theorem \cite{Belyi}). Let us consider a collection ${\mathcal F}$ of pairwise disjoint simple loops $\gamma_{1}, \ldots, \gamma_{s} \subset R-B_{f}$ so that the Euler characteristic of each connected component of $R-(B_{f} \cup \gamma_{1}\cup \cdots \cup \gamma_{s})$ is negative and none of the components of $R-(\gamma_{1} \cup \cdots \cup \gamma_{s})$ is a disc with only two branch values, both with branch order equal to $2$.  Let ${\mathcal G} \subset S$ be the collection of (necessarily simple) loops  obtained by lifting those in ${\mathcal F}$ by $f$. Next, we proceed to identify all points belonging to the same loop in ${\mathcal G}$ to obtain a stable surface $S^{*}$ of the same genus as $S$. Similarly, by doing the same procedure to the loops of ${\mathcal F}$, we obtain an stable genus $\gamma$ orbifold $R^{*}$. The map $f$ induces a continuous map $f^{*}:S^{*} \to R^{*}$ of degree $n$. We call such a pair $(S^{*},f^{*})$ a {\it topological stable $(\gamma,n)$-gonal pair}.  Now, if we provide analytically finite Riemann orbifold structures to each of the components of $R^{*}$ (that is, to the complement of its nodes), then we may lift these Riemann orbifold structures under $f^{*}$ to obtain an analytically finite Riemann orbifold structure on each component of $S^{*}$ minus their nodes. In this way, $S^{*}$ and $R^{*}$ will now carry stable Riemann orbifold structures so that $f^{*}$ still continuous of degree $n$ and its restriction to each connected component of $S^{*}$ minus its nodes is meromorphic. The resulting pairs will be called {\it stable $(\gamma,n)$-gonal pairs modelled by $(S^{*},f^{*})$}. Let us observe that the stable Riemann orbifold structures on $(S^{*},f^{*})$ depends on the spaces of orbifold structures on each of the connected components of $R^{*}$; in fact, the product space of the Teichm\"uller spaces of these components provide a parametrization of the space of stable Riemann orbifold structures of $(S^{*},f^{*})$. In particular, if each of components of $R^{*}$ are spheres with only $3$ marked points, then the structure is unique.

In Section \ref{Sec:nodedpairs} we provide uniformizations of stable $(\gamma,n)$-gonal pairs using pairs of certain noded Fuchsian groups.  This uniformization permits to obtain the following fact.

\s
\noindent
\begin{theo}\label{main2}
Let $(S,f)$ be a $(\gamma,n)$-gonal pair of hyperbolic type and let $(\Gamma,G)$ be an uniformizing pair of it. Then 
\begin{enumerate}
\item[(i)] The augmented Teichm\"uller psce $NT({\mathbb D},G)$ provides a model for the partial closure of ${\mathcal H}_{0}(S,f)$  
obtained by adding the corresponding equivalence classes of stable $(\gamma,n)$-gonal pairs.
\item[(ii)] The space parametrizing twisted isomorphic classes of stable $(\gamma,n)$-gonal pairs can be identified with the quotient $NT({\mathbb D},G)/M(\Gamma,G)$, which has the structure of a compact complex orbifold.
\end{enumerate}
\end{theo}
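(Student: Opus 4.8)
The plan is to deduce Theorem~\ref{main2} from the uniformization of stable $(\gamma,n)$-gonal pairs by pairs of noded Fuchsian groups established in Section~\ref{Sec:nodedpairs}, combined with Theorem~\ref{main1} and the standard description of the augmented Teichm\"uller space $NT({\mathbb D},G)$ as the Teichm\"uller space $T({\mathbb D},G)$ together with its boundary strata, each stratum consisting of the conjugacy classes of the noded Fuchsian groups obtained from $G$ by pinching an admissible collection of pairwise disjoint essential simple loops on the orbifold ${\mathbb D}/G$, with the stratum itself parametrized by the product of the Teichm\"uller spaces of the resulting parts.

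For part (i) I would first set up the bijection at the level of points. By Theorem~\ref{main1}(i) the interior $T({\mathbb D},G)\subset NT({\mathbb D},G)$ is identified with ${\mathcal H}_{0}(S,f)$. For a boundary point, represented by a noded Fuchsian group $G^{*}$ obtained by pinching a loop collection ${\mathcal F}$ on ${\mathbb D}/G$, I would invoke the uniformization statement of Section~\ref{Sec:nodedpairs}: it produces a noded subgroup $\Gamma^{*}<G^{*}$ of index $n$ so that the induced degree $n$ map between the associated stable Riemann orbifolds realizes a stable $(\gamma,n)$-gonal pair modelled on the topological stable pair $(S^{*},f^{*})$ attached to ${\mathcal F}$ and to its lift ${\mathcal G}$; conversely every stable $(\gamma,n)$-gonal pair modelled on a topological stable pair of $(S,f)$ arises this way, and the pair $(\Gamma^{*},G^{*})$ is unique up to conjugation by a conformal automorphism of ${\mathbb D}$. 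This yields a bijection between $NT({\mathbb D},G)$ and the disjoint union of ${\mathcal H}_{0}(S,f)$ with the isomorphism classes of stable $(\gamma,n)$-gonal pairs modelled on the topological stable pairs of $(S,f)$. To see that this union is genuinely the \emph{partial closure} of ${\mathcal H}_{0}(S,f)$, and not merely an abstract set-theoretic union, I would check that the bijection is a homeomorphism for the natural topologies: a stable pair is a limit of $(\gamma,n)$-gonal pairs precisely when their normalized uniformizing pairs $(\Gamma_{k},G_{k})$ converge geometrically to $(\Gamma^{*},G^{*})$, which is by definition convergence in $NT({\mathbb D},G)$, and continuity in both directions follows from the continuous dependence of the uniformization construction of Section~\ref{Sec:nodedpairs} on the orbifold moduli of the components.

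For part (ii) I would extend the modular group action to the augmentation: each element of the modular group $\mathrm{Mod}({\mathbb D}/G)$ of holomorphic automorphisms of $T({\mathbb D},G)$, and in particular each element of $M(\Gamma,G)$, permutes the strata according to its action on isotopy classes of admissible loop systems, extending the biholomorphic action on $T({\mathbb D},G)$. Arguing exactly as in the proof of Theorem~\ref{main1}(ii), two points of $NT({\mathbb D},G)$ define twisted isomorphic stable $(\gamma,n)$-gonal pairs if and only if they lie in the same $M(\Gamma,G)$-orbit, so $NT({\mathbb D},G)/M(\Gamma,G)$ is the asserted parameter space. For the compact complex orbifold structure I would appeal to the known facts that $NT({\mathbb D},G)$ carries a canonical complex-analytic structure (the Bers and Hubbard--Koch complex structure on augmented Teichm\"uller space) for which the modular group acts by biholomorphisms, properly discontinuously, with finite point stabilizers, and that the full quotient $NT({\mathbb D},G)/\mathrm{Mod}({\mathbb D}/G)$ is the Deligne--Mumford-type compactification of the moduli space of the orbifold ${\mathbb D}/G$, which is compact. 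Since $M(\Gamma,G)$ has finite index in $\mathrm{Mod}({\mathbb D}/G)$, the quotient $NT({\mathbb D},G)/M(\Gamma,G)$ is a finite branched covering of a compact complex orbifold, hence is itself a compact complex orbifold.

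The main obstacle I expect is the compatibility of the two stratifications: one must verify that pinching a loop system ${\mathcal F}$ on ${\mathbb D}/G$ which is admissible in the sense needed for stable $(\gamma,n)$-gonal pairs (negative Euler characteristic of each piece, no component becoming a disc with exactly two order-two branch values) corresponds, after lifting along $\Gamma<G$, precisely to the noding data of a topological stable pair of $(S,f)$, and that the parametrization of stable Riemann orbifold structures on $(S^{*},f^{*})$ by the product of the Teichm\"uller spaces of the components of $R^{*}$ matches the parametrization of the corresponding boundary stratum of $NT({\mathbb D},G)$. Controlling the limiting behaviour --- geometric convergence of the lifted groups $\Gamma_{k}$ when the $G_{k}$ converge, together with collar and injectivity-radius estimates ensuring the lifted curves pinch in the expected way --- is the technical heart of the argument, but this is exactly what the uniformization statement of Section~\ref{Sec:nodedpairs} is designed to provide, so the proof of Theorem~\ref{main2} reduces to assembling these ingredients and checking continuity of the resulting identifications.
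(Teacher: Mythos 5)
Your overall strategy matches the paper's in outline: identify the interior of $NT({\mathbb D},G)$ with ${\mathcal H}_{0}(S,f)$ via Theorem \ref{main1}, realize boundary points by pairs of noded Fuchsian groups uniformizing stable $(\gamma,n)$-gonal pairs, and pass to the quotient by $M(\Gamma,G)$ for part (ii). But there is a genuine gap in how you handle the heart of the matter. The central content of the paper's argument is exactly the construction you defer to ``the uniformization statement of Section \ref{Sec:nodedpairs}''; that section \emph{is} the proof of Theorem \ref{main2}, so invoking it as a black box is circular. The paper builds the uniformizing pair explicitly: the loops $\psi({\mathcal F})$ lift to a pinchable family ${\mathcal G}$ for $G$, hence for $\Gamma$ (Remark \ref{obsefuch}); Propositions \ref{propo3} and \ref{propo4} produce a noded Beltrami coefficient $\mu \in L^{\infty}_{{\rm noded}}({\mathbb D},G)$ with limit set $\overline{\mathcal G}$ and a noded quasiconformal deformation $w_{\mu}$; Theorem \ref{Teo1} and Corollary \ref{coro4} show that $(\theta(\Gamma),\theta(G))$ is a pair of noded Fuchsian groups whose restriction to $w_{\mu}(\Omega(\mu)\cap {\mathbb D})$ uniformizes a stable pair modelled by $(S^{*},f^{*})$; and the family $t\mu$, $t \in {\mathbb D}$, exhibits that stable pair as a limit of $(\gamma,n)$-gonal pairs $(S_{t},f_{t})$, which is what makes $NT({\mathbb D},G)$ the partial closure of ${\mathcal H}_{0}(S,f)$ rather than an abstract set-theoretic union. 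Your alternative sketch via geometric convergence and collar estimates is not carried out and would have to substitute for this machinery.

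In part (ii) your justification of the compact complex orbifold structure is incorrect as stated: the modular group does \emph{not} act properly discontinuously with finite point stabilizers on $NT({\mathbb D},G)$. The augmented Teichm\"uller space is not locally compact, and a boundary point is fixed by the infinite subgroup generated by the Dehn twists about the pinched curves; this is precisely the obstruction that makes the naive quotient argument fail. Your fallback inference (a finite branched cover of a compact complex orbifold is again a compact complex orbifold) is likewise not automatic in this setting --- it is essentially the statement being proved. The correct route, and the one the paper takes, is to quote the result of \cite{HV} recalled in Section \ref{Ex5}: for any finite index subgroup $M$ of $M({\mathbb D},G)$, the quotient $NT({\mathbb D},G)/M$ is a compact complex orbifold; applying this with $M=M(\Gamma,G)$, whose finite index in $M({\mathbb D},G)$ was already noted in Section \ref{Sec:Fuchsian}, finishes the argument.
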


\s

The above provides a Kleinian groups description being in a parallel point of view as the description provided in \cite{Co-Se} for the case $\gamma=0$. 

An interesting situation is provided for genus zero $n$-gonal pairs as the above provides a description of degenerations of rational maps in terms of Kleinian groups which is somehow related to part of the work done in \cite{Arfeux}.

\s
\noindent
\begin{coro}
Let $R \in {\mathbb C}(z)$ be a rational map of degree $d \geq 2$ and let $B=\{p_{1},\ldots,p_{r}\}$ be its locus of branch values. For each $p_{j}$ let $n_{j}$ be the minimum common multiple of the local degrees of $R$ at its preimages. Assume that $n_{1}^{-1}+\cdots+n_{r}^{-1}<r-2$.
Let $G$ be a Fuchsian group, acting on the unit disc ${\mathbb D}$, uniformizing the orbifold of genus zero whose conical points set is $B$ and the conical order of $p_{j}$ is $n_{j}$. Then the space of isomorphic classes of rational maps topologically equivalent to $R$ is isomorphic to the Teichm\"uller space of $G$, this being a simply-connected complex manifold of dimension $r-3$. Its partial closure obtained by adding isomorphic classes of geometrical degenerations of it is isomorphic to the augmented Teichm\"uller space of $G$. Similarly, the corresponding space of twisted isomorphic classes is a complex orbifold of dimension $r-3$ and its closure obtained by adding the twisted classes of its degenerations is isomorphic to a compact complex orbifold of same dimension.

\end{coro}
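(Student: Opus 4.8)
The plan is to recognize the Corollary as a direct translation of Theorems \ref{main1} and \ref{main2} to the special case of genus zero $n$-gonal pairs, once we check that the hypotheses have been set up correctly. First I would observe that a rational map $R \in \mathbb{C}(z)$ of degree $d$ is exactly a holomorphic branched cover $R: \widehat{\mathbb{C}} \to \widehat{\mathbb{C}}$ of degree $d$, hence the pair $(\widehat{\mathbb{C}}, R)$ is a $(0,d)$-gonal pair. Two rational maps being topologically (respectively holomorphically, twisted) equivalent as covers is precisely the notion of equivalence of $n$-gonal pairs from the Introduction, so ``isomorphic classes of rational maps topologically equivalent to $R$'' is by definition $\mathcal{H}_0(\widehat{\mathbb{C}}, R)$ and ``twisted isomorphic classes'' is $\mathcal{H}(\widehat{\mathbb{C}}, R)$.

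Next I would verify that $(\widehat{\mathbb{C}}, R)$ is of hyperbolic type with the stated signature. The orbifold structure assigned to the target $R$-sphere is the one whose cone points are the branch values $p_1, \dots, p_r$ with cone orders $n_j = \mathrm{lcm}$ of the local degrees of $R$ over $p_j$; this is the minimal orbifold structure on the target that makes $R$ an orbifold covering, so that each branch point of $S = \widehat{\mathbb{C}}$ over $p_j$ has a well-defined cone order dividing $n_j$ and the lifted orbifold structure on $S$ is analytically finite. The signature of the target orbifold is $(0; n_1, \dots, n_r)$, and its Euler characteristic is $-\bigl(r - 2 - \sum_{j=1}^r n_j^{-1}\bigr)$, which is negative precisely under the hypothesis $n_1^{-1} + \cdots + n_r^{-1} < r - 2$. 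Hence the orbifold ${\mathbb{D}}/G$ of that signature exists with $G$ a (cofinite) Fuchsian group acting on $\mathbb{D}$, and $(\widehat{\mathbb{C}}, R)$ is of hyperbolic type with uniformizing pair $(\Gamma, G)$, where $\Gamma < G$ has index $d$ and is determined by the monodromy of $R$.

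With this dictionary in place, the four assertions of the Corollary are literal specializations: part (i) of Theorem \ref{main1} gives that $\mathcal{H}_0(\widehat{\mathbb{C}}, R) \cong T(\mathbb{D}, G)$ is a simply-connected finite-dimensional complex manifold; the dimension equals $\dim T(\mathbb{D}, G) = 3\gamma - 3 + r = r - 3$ since $\gamma = 0$ (here I would invoke the standard formula for the complex dimension of the Teichm\"uller space of a genus-$\gamma$ orbifold with $r$ cone points). Part (ii) of Theorem \ref{main1} gives that $\mathcal{H}(\widehat{\mathbb{C}}, R) = T(\mathbb{D}, G)/M(\Gamma, G)$ is a complex orbifold of the same dimension $r - 3$. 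Theorem \ref{main2}(i) identifies the partial closure of $\mathcal{H}_0(\widehat{\mathbb{C}}, R)$ obtained by adding classes of stable $(0,d)$-gonal pairs — which in this genus-zero setting are exactly the geometric degenerations of $R$ into stable (noded) curves with a continuous degree-$d$ map — with the augmented Teichm\"uller space $NT(\mathbb{D}, G)$, and Theorem \ref{main2}(ii) identifies the closure of $\mathcal{H}(\widehat{\mathbb{C}}, R)$ with the compact complex orbifold $NT(\mathbb{D}, G)/M(\Gamma, G)$ of dimension $r - 3$.

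The only genuine point requiring care — the step I expect to be the main obstacle — is the matching of the cone orders: one must confirm that the choice $n_j = \mathrm{lcm}$ of local degrees over $p_j$ is exactly the choice needed so that $R$ becomes an orbifold morphism onto the orbifold of signature $(0; n_1, \dots, n_r)$ and so that the pulled-back orbifold structure on the source sphere is analytically finite of hyperbolic type (equivalently, so that $\Gamma$ is torsion-free where needed and the induced pair is a $(0,d)$-gonal pair of hyperbolic type in the sense of Section \ref{Sec:uniformizationsofpairs}). This is a local computation at each $p_j$: if $R$ has local degrees $d_{j,1}, \dots, d_{j,k_j}$ over $p_j$, then pulling back the cone of order $n_j$ produces at the $i$-th preimage a cone of order $n_j/\gcd(n_j, d_{j,i}) = n_j/d_{j,i}$, an integer precisely because $n_j$ is a common multiple; taking the lcm guarantees these are all genuine orbifold points (or smooth points when $d_{j,i} = n_j$) and no further orbifold structure is forced. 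Once this is checked, everything else is a direct appeal to the already-established Theorems \ref{main1} and \ref{main2}, together with the dimension count $\dim_{\mathbb{C}} T(\mathbb{D}, G) = r - 3$.
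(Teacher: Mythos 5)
Your proposal is correct and takes essentially the same route as the paper, which states this corollary as a direct specialization of Theorems \ref{main1} and \ref{main2} to the $(0,d)$-gonal pair $(\widehat{\mathbb C},R)$: the hypothesis $n_{1}^{-1}+\cdots+n_{r}^{-1}<r-2$ is exactly the hyperbolic-type condition for the signature $(0;n_{1},\ldots,n_{r})$, and the dimension is $3\gamma-3+r=r-3$. Your check that the lcm choice of cone orders makes $R$ an orbifold covering with integer cone orders $n_{j}/d_{q}$ at the preimages is the same bookkeeping carried out in Section \ref{Sec:uniformizationsofpairs}.
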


\s

This paper is organized as follows. In Section \ref{Sec:prelim1} we review some definitions and basic facts on Kleinian groups, in particular of noded Fuchsian groups and some of its properties previously obtained in \cite{Hidalgo:nodedSchottky, Hidalgo:nodedFuchsian}.  In Section \ref{Sec:prelim2} we recall the definition and some facts on the quasiconformal deformation of Kleinian groups, in particular, the Teichm\"uller space of a finitely generated Fuchsian group. In Section \ref{Sec:NodedBeltrami} we recall the concept of noded Beltrami coefficients of Kleinian groups from \cite{Hidalgo-Vasiliev} which permits to construct a partial closure of the quasiconformal deformation space of a Kleinian group. We will be mainly interested, for the purpose of this article, on the case of Fuchsian groups, but we provide the general point of view. In the final two sections we use the previous facts to describe, in terms of Fuchsian groups and noded Fuchsian groups, the unifomizations of $(\gamma,n)$-gonal and stable $(\gamma,n)$-gonal pairs, respectively.

%%%%%%%%%%%%%%%%%%%%%
%%%%%%%%%%%%%%%%%%%%%
\section{Some preliminaries on Kleinian groups}\label{Sec:prelim1}

%%%%%%%%%%%%%%%%%
\subsection{Analytically finite Riemann orbifolds}
An {\it analytically finite Riemann orbifold} is given by a closed Riemann surface $S$ of genus $g \geq 0$ (the underlying Riemann surface structure of the Riemann orbifold) together with a finite collection of conical points $x_{1},...,x_{r} \in S$ of orders $2 \leq m_{1} \leq m_{2} \leq \cdots \leq m_{r} \leq \infty$, respectively. If $x_{j}$ has cone order $\infty$, then this means that it represents a puncture of the orbifold. The {\it signature} of the orbifold is defined by the tuple $(g;m_{1},...,m_{r})$ and it is called {\it hyperbolic} if  $2g-2+\sum_{i=1}^{r}(1-m_{i}^{-1})>0$.

If (i) $r=3$ and $g= 0$, then the orbifold is called a {\it triangular orbifold}, (ii) if $r=0$, then the orbifold is a closed Riemann surface, and (iii) if $r>0$ and $m_{j}=\infty$ for all $j$, then it is an analytically finite punctured Riemann surface (this is $S$ minus all the points $x_{j}$).

%%%%%%%%%%%%%%%%%%%
\subsection{Noded Riemann surfaces/orbifolds}
Let us consider a countable collection $\{{\mathcal O}_{j}\}_{j \in J}$ of Riemann orbifolds, that is, each ${\mathcal O}_{j}$ consists of a Riemann surface $S_{j}$  together a discrete collection of cone points $B_{j}=\{p_{ji}\} \subset S_{j}$ and integer values $n_{ji} \geq 2$ (the cone orders of the points $p_{ji}$). Let us consider a discrete collection of points ${\mathcal E} \subset \bigcup_{j \in J} (S_{j}-B_{j})$ and an order two bijective map $T:{\mathcal E} \to {\mathcal E}$.

 We proceed to identify the points $q$ and $T(q)$, for each $q \in {\mathcal E}$, to obtain a space $X$; called a {\it noded Riemann orbifold}. The points obtained by the identification of the points $q$ and $T(q)$, for $q \in {\mathcal E}$, are called (i) {\it nodes} of $X$ if $T(q)=q$ and (ii) {\it phantom nodes} if $T(q)=q$. We denote the set of nodes and phantom nodes of $X$ by $N(X)$. Observe that
the points in $N(X)$ correspond to punctures on each connected component of $X-N(X)$ and that these components are given by $S_{j}-({\mathcal E}  \cap S_{j})$. The set of cone points of $X$ is given by $B(X):=\bigcup_{j} B_{j}$.

In the case that every $B_{j}=\emptyset$ (that is, ${\mathcal O}_{j}$ is just a Riemann surface), we say that $X$ is a  {\it noded Riemann surface}. 

An isomorphism between noded Riemann orbifolds is a homeomorphism that send cone points to cone points (respecting their orders) which, restricted to the complement of the nodes, is analytic. If there is an isomorphism between two noded Riemann orbifolds/surfaces, 
then we say that they are isomorphic or conformally equivalent. This, in particular, permits to talk on automorphisms of noded Riemann orbifolds/surfaces.

A noded Riemann surface which is homeomorphic to the space obtained by pinching a non-trivial simple loop on a torus is called a {\it stable Riemann surface of genus one}.  A noded Riemann surface which is homeomorphic to the space obtained by the process of pinching a family (necessarily finite) of  
disjoint simple closed hyperbolic geodesics on a closed Riemann surface of genus $g \geq 2$ is called a {\it stable Riemann surface of genus $g$}.
Isomorphic classes of stable Riemann surfaces are the extra points Mumford needed to add to the moduli space ${\mathcal M}_{g}$ of closed Riemann surfaces of genus $g$ to provide the Deligne-Mumford's compactification $\overline{\mathcal M}_{g}$.

Let us observe that the space obtained as the quotient $X/H$, where $X$ is a noded Riemann surface and $H$ is a (finite) group of automorphisms of $X$
is an example of a  noded Riemann orbifold. In the case that $X$ is a stable Riemann surface, then $X/H$ is also called a {\it stable Riemann orbifold}.

%%%%%%%%%%%%%%%
\subsection{Kleinian groups}
A {\it Kleinian group} is just a discrete subgroup $G$ of ${\rm PSL}_{2}({\mathbb C})$ (seen as the group of M\"obius transformations acting on $\widehat{\mathbb C}$). Generalities on Kleinian/Fuchsian groups can be found, for instance, in the books \cite{Beardon,M1}. In this section we recall some of the basics we will need in the rest of this paper.

%%%%%%%%%%%%%%%
\subsubsection{The region of discontinuity}
The  {\it region of discontinuity} of a Kleinian group $G$ is the (might be empty) open set $\Omega(G)$ of points over which $G$ acts properly discontinuous; the complement $\Lambda(G)=\widehat{\mathbb C}-\Omega(G)$ is its {\it limit set}.  If $\Omega(G) \neq \emptyset$, then the quotient space $\Omega(G)/G$ is a union of Riemann orbifolds. By Ahlfor's finiteness theorem \cite{Ahlfors:finitud}, if $G$ is finitely generated, then such a quotient consists of a finite number of analytically finite Riemann orbifolds. 

Let $\delta \in G$ be a loxodromic transformation. We say that $\delta$ is {\it primitive} if it is not a nontrivial positive power of another loxodromic transformation in $G$. We say that $\delta$ is {\it simple loxodromic} if there is a simple arc on $\Omega(G)$ which is invariant under $\delta$, we call it an {\it axis} of $\delta$, whose projection on $\Omega(G)/G$ is a simple loop or a simple arc connecting two conical points of order $2$. 

%%%%%%%%%%%%%%
\subsubsection{The extended region of discontinuity}
Let $G$ be a finitely Kleinian group with non-empty region of discontinuity.

A parabolic transformation $\eta \in G$, with fixed point $p$, is called {\it double-cusped}, if (i) any parabolic element of $G$ commuting with $\eta$ belongs to the cyclic group $\langle \eta \rangle$, and (ii) there are two tangent  open discs at $p$ in $\Omega(G)$ whose union is invariant under the stabilizer of $p$ in $G$.

The  {\it extended region of discontinuity} of $G$ is defined as $\Omega(G)^{ext}=\Omega(G) \cup P(G)$, where $P(G)$ is the set of fixed points of
the double-parabolic elements of $G$.

On $\Omega(G)^{ext}$ we consider its cuspidal topology; the topology
generated by the usual open sets in $\Omega(G)$ and the sets of the form 
$D_{1} \cup D_{2} \cup \{p\}$, where $p \in P(G)$, and $D_{1}, D_{2} \subset
\Omega(G)$ are round discs tangent at $p$. Observe that if $G$ has no parabolic transformations,
then the extended region of discontinuity coincides with its region of
discontinuity, that is, $\Omega(G)=\Omega(G)^{ext}$. 

In the cuspidal topology, the group $G$ acts as a group of homeomorphisms on $\Omega(G)^{ext}$,
keeping invariant each $\Omega(G)$ and $P(G)$, and its restriction to
$\Omega(G)$ being by holomorphic automorphisms. 

By Selberg's lemma, $G$ contains a torsion-free finite index normal subgroup $K$. The finite index condition asserts that $\Omega(K)=\Omega(G)$ and $P(K)=P(G)$, in particular, $\Omega^{ext}(K)=\Omega^{ext}(G)$. The quotient $\Omega(G)^{ext}/K$ is a noded Riemann surface whose nodes corresponds one-to-one to the $K-$equivalence classes of the parabolic fixed points in $P(G)$ (those having an involution in the stabilizer produce the so called phantom nodes).  By Ahlfors' finiteness theorem,  
the number of components of $\Omega(G)/K=\Omega(G)^{ext}/K -N(\Omega(G)^{ext}/K)$ 
is finite, each one an analytically finite Riemann surface. It follows that 
$\Omega(G)^{ext}/G$, with the quotient cuspidal topology, is a noded Riemann orbifold, and it contains the orbifold $\Omega(G)/G$ as a dense open subset. The points in $P(G)/G$ are the nodes (and phantom nodes) of $\Omega(G)^{ext}/G$.

%%%%%%%%%%%%%%%%%%%%
\subsection{Fuchsian groups}
Let ${\mathbb D}$ be the unit disc in the complex plane and  let ${\rm Aut}({\mathbb D})$ be its group of conformal automorphisms (this being a subgroup of ${\rm PSL}_{2}({\mathbb C})$). 

A finitely generated Kleinian group $F$ being a subgroup of ${\rm Aut}({\mathbb D})$ is called a {\it Fuchsian group}. The Fuchsian group $F$ is called of the {\it first kind} if its limit set is all of the unit circle (so is region of discontinuity consists of two discs); otherwise, it is called of the {\it second kind} (so its region of discontinuity is connected).

As a consequence of Selberg's lemma \cite{Beardon}, $F$ has a torsion-free normal subgroup $K$ of finite index; 
this is again a finitely generated Fuchsian group of the same kind as $F$.
If $F$ is of the second kind without parabolic elements, then $K$ is a Schottky group of some finite rank $g \geq 0$ and if $F$ is of the first kind without parabolic elements, then $K$ is a co-compact Fuchsian group uniformizing a closed Riemann surface of some genus $g \geq 2$ (in this case we say that $K$ is a $\pi^{g}$ group).

By classical work of Fricke and Klein, if the Fuchsian group $F$ is of the first kind and without parabolic elements, then it has a presentation of the form
$$F=\langle \alpha_{1},\ldots,\alpha_{g},\beta_{1},\ldots,\beta_{g},\delta_{1},\ldots,\delta_{r}:\prod_{j=1}^{g}[\alpha_{j},\beta_{j}] \prod_{i=1}^{r}\delta_{i}=\delta_{1}^{m_{1}}=\cdots=\delta_{r}^{m_{r}}=1\rangle,$$
where $[\alpha_{j},\beta_{j}]=\alpha_{j}\beta_{j}\alpha_{j}^{-1}\beta_{j}^{-1}$, $m_{i} \geq 2$ are integers so that
$2g-2+\sum_{i=1}^{r}(1-m_{i}^{-1})>0$. In this case, ${\mathbb D}/F$ is a compact hyperbolic Riemann orbifold of signature 
$(g;m_{1},\ldots,m_{r})$; which is also called the signature of $F$.  If $r=0$, then ${\mathbb D}/F$ is a closed Riemann surface of genus $g$ and its signature is denoted by $(g;-)$. 

As a consequence of the uniformization theorem, every compact hyperbolic orbifold ${\mathcal O}$ is isomorphic to ${\mathbb D}/F$ for some Fuchsian group $F$.

%%%%%%%%%%%%%%%%%%%
\subsection{Noded Fuchsian groups}
A Kleinian group is called a {\it noded Fuchsian group} \cite{Hidalgo:nodedFuchsian}
if it is a geometrically finite and isomorphic to some Fuchsian group of the first kind.

Torsion-free noded Fuchsian groups come in two flavors \cite{Hidalgo:nodedSchottky,Hidalgo:nodedFuchsian}:  (i) {\it noded Schottky groups of rank $g\geq 0$} (isomorphic to free groups of rank $g\geq 0$) and (ii) {\it noded $\pi^{g}$ groups} (isomorphic to the fundamental group of a closed orientable surface of genus $g \geq 2$). So, as a consequence of Selberg's lemma, every noded Fuchsian group has a finite index normal subgroup being either a noded Schottky group or a noded $\pi^{g}$ group.

\s
\noindent
\begin{theo}[\cite{Hidalgo:nodedFuchsian}]
Noded Fuchsian groups have non-empty region of discontinuity.
\end{theo}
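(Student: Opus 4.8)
The plan is to argue by contradiction, exploiting the dichotomy that a geometrically finite Kleinian group whose limit set is all of $\widehat{\mathbb C}$ must be a lattice in $\mathrm{PSL}_2(\mathbb C)$, together with the fact that a Fuchsian group of the first kind is ``too small'' — homologically — to be such a lattice. So suppose, for contradiction, that $\Omega(G)=\emptyset$, i.e.\ $\Lambda(G)=\widehat{\mathbb C}$. First I would record that, since $G$ is geometrically finite, it is finitely generated, and the hyperbolic convex core of $\mathbb H^{3}/G$ — the quotient of the convex hull of $\Lambda(G)$ — is all of $\mathbb H^{3}/G$. Invoking the standard characterization of geometric finiteness (Bowditch's equivalent conditions), the convex core, hence $\mathbb H^{3}/G$ itself, has finite hyperbolic volume; thus $G$ is a lattice in $\mathrm{PSL}_2(\mathbb C)$. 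By Selberg's lemma fix a torsion-free normal subgroup $K$ of finite index in $G$; then $K$ is a torsion-free lattice, that is, the fundamental group of a finite-volume hyperbolic $3$-manifold $M$.

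Next I would compare this with the structure forced by the hypothesis. Fix an isomorphism $G\cong F$ with $F$ a Fuchsian group of the first kind. It carries $K$ to a finite-index subgroup of $F$, which (as recalled in the excerpt) is again a torsion-free Fuchsian group of the first kind, hence — by the Fricke--Klein classification — either a closed surface group $\pi_{1}(\Sigma_{g})$ with $g\ge 2$ (the co-compact, parabolic-free case) or a finitely generated free group of rank $\ge 2$ (the case with parabolics). In particular $\mathrm{cd}(K)\le 2$ and $K$ has no subgroup isomorphic to $\mathbb Z^{2}$ (every abelian subgroup of a surface group or a free group is cyclic). This is incompatible with $K=\pi_{1}(M)$: if $M$ is closed then $\pi_{1}(M)$ is a $3$-dimensional Poincar\'e duality group, so $\mathrm{cd}(K)=3$; if $M$ has cusps then each cusp contributes a subgroup of $K$ isomorphic to $\mathbb Z^{2}$. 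Either way we obtain a contradiction, and therefore $\Omega(G)\neq\emptyset$.

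The only step that really needs care is the very first one: choosing the right form of ``geometrically finite'' so that $\Lambda(G)=\widehat{\mathbb C}$ genuinely forces $\mathrm{Vol}(\mathbb H^{3}/G)<\infty$ (and checking this survives the presence of parabolic elements / phantom-node cusps). Everything after that is soft group theory. An alternative, more constructive route — the one implicit in \cite{Hidalgo:nodedSchottky,Hidalgo:nodedFuchsian} — is to realize $G$ explicitly as a plumbing (geometric limit) of quasi-Fuchsian groups and to show directly that its limit set is a ``noded circle'': a countable nested family of round loops glued along the parabolic fixed points of the pinched elements, whose complement is non-empty and has two $G$-invariant components covering the two sides of the associated noded quotient orbifold. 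That approach dispenses with $3$-manifold theory but trades it for a careful analysis of the limiting configuration of isometric circles, which is where the real work would lie.
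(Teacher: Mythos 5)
Your proof is correct, but note that the paper itself does not prove this statement: it is quoted from \cite{Hidalgo:nodedFuchsian}, where non-emptiness of $\Omega(G)$ is obtained within the explicit structure theory of noded Schottky and noded $\pi^{g}$ groups (the noded retrosection and simultaneous uniformization theorems recalled in Sections \ref{Sec:nrt} and \ref{Sec:nsut}), i.e.\ essentially the ``constructive route'' you sketch at the end. Your route is genuinely different and softer. Step (a): geometric finiteness together with $\Lambda(G)=\widehat{\mathbb C}$ makes the convex core all of ${\mathbb H}^{3}/G$, hence of finite volume, so $G$ would be a lattice in ${\rm PSL}_{2}({\mathbb C})$; this is the step you rightly single out, and it is a standard citable fact (Bowditch's equivalent formulations of geometric finiteness, or finite-sided Dirichlet polyhedra in dimension $3$; torsion causes no trouble since Selberg's lemma bounds the orders of finite subgroups). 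Step (b): a torsion-free normal subgroup $K$ of finite index is then the fundamental group of a finite-volume hyperbolic $3$-manifold, hence either a ${\rm PD}(3)$ group of cohomological dimension $3$ or a group containing ${\mathbb Z}^{2}$ cusp subgroups (tori, by orientability). Step (c): on the other hand $K$ is abstractly isomorphic to a torsion-free finite-index subgroup of a finitely generated Fuchsian group of the first kind, hence to a closed surface group of genus $\geq 2$ or a free group of finite rank, which has cohomological dimension $\leq 2$ and contains no ${\mathbb Z}^{2}$; the index and torsion bookkeeping (isomorphisms preserve index, finite-index subgroups of first-kind Fuchsian groups are again of the first kind) is handled correctly, so the contradiction is complete modulo the usual references. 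The trade-off against the source's approach: the constructive proof in \cite{Hidalgo:nodedSchottky,Hidalgo:nodedFuchsian} yields much more than non-emptiness --- connectivity (or the two-component structure) of $\Omega(G)^{ext}$ and the identification of the stable quotient orbifolds, which is what the rest of the paper actually uses --- whereas your argument is shorter, avoids any analysis of pinching deformations or limiting isometric circles, but delivers only the non-vacuity statement.
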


\s

If $G$ is a noded Fuchsian group, then, as $G$ is isomorphic to some Fuchsian group, it cannot have rank two parabolic subgroups and all parabolics are double-cusped. As already noted above, $G$ contains a torsion-free finite index normal subgroup $K$, this being either a noded Schottky group or a noded $\pi^{g}$ group. The finite index condition asserts that $\Omega(K)=\Omega(G)$ and $P(K)=P(G)$, in particular, $\Omega^{ext}(K)=\Omega^{ext}(G)$. 
 Below, in Sections \ref{Sec:nrt} and \ref{Sec:nsut}, we will see that noded Riemann surface $\Omega(G)^{ext}/K$ consists of either: (i) two stable Riemann surfaces of genus $g$, if $K$ is noded $\pi^{g}$-group, or (ii) one stable Riemann surface of genus $g$, if $K$ is a noded Schottky group of rank $g$). It follows that $\Omega(G)^{ext}/G$, with the quotient cuspidal topology, is a stable Riemann orbifold, and it contains the Riemann orbifold $\Omega(G)/G$ as a dense open subset. The points in $P(G)/G$ are the nodes of $\Omega(G)^{ext}/G$ (and there are no phantom nodes).  

As an example, the cyclic group $G=\langle \gamma(z)=z+1 \rangle$ is a noded Fuchsian group (in fact a noded Schottky group of rank one). In this case, $\Omega(G)^{ext}=\widehat{\mathbb C}$, $\Omega(G)={\mathbb C}$ and $\Omega(G)^{ext}/G$ is a stable Riemann surface of genus one (its node being the projection of $\infty$). 

%%%%%%%%%%%%%%%%%%
\subsection{Noded retrosection theorem}\label{Sec:nrt}
Koebe's retrosection theorem \cite{Koebe} asserts that every closed Riemann surface of genus $g$ can be uniformized by a Schottky group of rank $g$.
A retrosection theorem with nodes hold and it can be stated as follows.

\s
\noindent
\begin{theo}[Noded retrosection theorem \cite{Hidalgo:nodedSchottky,Hidalgo:nodedFuchsian}]
\mbox{}
\begin{enumerate}
\item If $G$ is a noded Schottky group of rank $g$, then $\Omega(G)^{ext}$ is connected and
$\Omega(G)^{ext}/G$ is a stable Riemann surface of genus $g$.

\item If $S$ is a stable Riemann surface of 
genus $g$, then there is a noded Schottky group $G$ of 
genus $g$ such that $\Omega(G)^{ext}/G$ is conformally equivalent to $S$.
\end{enumerate}
\end{theo}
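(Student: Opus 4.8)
The plan is to treat the two directions separately: part (1) through the combinatorial structure of a noded Schottky fundamental domain, and part (2) through a degeneration of Koebe's classical retrosection theorem, controlled by the deformation theory of Schottky space recalled later in the paper.

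For (1), I would start from a set of free generators $g_{1},\ldots,g_{g}$ of $G$ together with an associated \emph{noded Schottky fundamental domain}: a region $D\subset\widehat{\mathbb C}$ bounded by $2g$ loops $C_{1},C_{1}',\ldots,C_{g},C_{g}'$ that are pairwise disjoint or tangent at points of $P(G)$, with $g_{j}(C_{j})=C_{j}'$ and $g_{j}$ carrying the outside of $C_{j}$ onto the inside of $C_{j}'$; the existence of such a domain --- equivalently, that a noded Schottky group of rank $g$ is obtained from a classical one by allowing pairing loops and structure loops to become tangent --- is exactly what the cited references establish. Granting this, $\Omega(G)^{ext}$ is tiled, in the cuspidal topology, by the $G$-translates of the closure $\overline{D}$ (closure taken so as to include the relevant tangency points), and the nerve of this tiling is the Cayley graph of the free group $G$, hence a tree; consequently any two tiles are joined by a finite chain of tiles that consecutively share a boundary loop or at least a tangency point, which gives connectedness of $\Omega(G)^{ext}$. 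Note that $\Omega(G)$ itself need not be connected, since pinching a separating structure loop splits it, the pieces being reglued only after the parabolic fixed points of $P(G)$ are adjoined. The quotient is then the identification space $\overline{D}/{\sim}$, where $\sim$ glues $C_{j}$ to $C_{j}'$ by $g_{j}$; since $\overline{D}$ is a planar surface (a sphere with $2g$ boundary loops, some of them tangent), gluing these loops in $g$ pairs yields a closed orientable surface of genus $g$ whose nodes are precisely the images of the tangency points. It then remains to see that this noded surface is \emph{stable}, i.e.\ that every component of the complement of the nodes is analytically finite of negative Euler characteristic; this follows because $G$ acts on each such component through a non-elementary free group, together with the absence --- built into the notion of a noded Schottky group --- of components that are spheres with at most two nodes. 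The cases $g=0$ (the trivial group) and $g=1$ (a classical rank-one Schottky group, or $\langle z\mapsto z+1\rangle$ for the nodal torus) are checked by hand.

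For (2), given a stable Riemann surface $S$ of genus $g$ with node set $N(S)$, I would realize $S$ as a limit in $\overline{\mathcal M}_{g}$ of smooth genus-$g$ Riemann surfaces $S_{t}$, $t\to 0$, obtained by a plumbing (node-opening) construction at each node of $S$, so that the only vanishing cycles are the loops pinched at the nodes. Fix on $S$, away from $N(S)$, a cut system of $g$ disjoint simple loops that are homologically independent and have connected planar complement; transporting it to the nearby $S_{t}$ and applying Koebe's retrosection theorem, we get classical Schottky groups $G_{t}$ of rank $g$ with $\Omega(G_{t})/G_{t}\cong S_{t}$ and with the marking depending continuously on $t$. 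The family $\{G_{t}\}$ thus traces a path in Schottky space; using the noded Beltrami coefficients of \cite{Hidalgo-Vasiliev} and the description of the augmented Schottky space in \cite{Hidalgo:nodedSchottky}, this path has a limit $G$ as $t\to 0$ (after a fixed normalization) which is a noded Schottky group of rank $g$. Finally one shows $\Omega(G)^{ext}/G\cong S$: the noded Schottky domains of $G_{t}$ converge, in the Carath\'eodory kernel sense, to a noded Schottky domain of $G$ in which exactly the loops corresponding to $N(S)$ have become tangent, so the quotient orbifolds converge and the limit, by part (1) and the control on the vanishing cycles, is $S$ together with its nodes.

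The main obstacle is the limiting argument in (2): one must guarantee that the algebraic/geometric limit of the $G_{t}$ exists in the appropriate sense and is \emph{precisely} a noded Schottky group with the prescribed node configuration --- that is, that no unintended parabolics appear, that the limit group stays discrete and freely generated of rank $g$, that the limit set does not collapse, and that the extended quotient is $S$ rather than some other stable surface in the closure of its stratum. This is exactly the content of the deformation theory of Schottky spaces and their augmentations developed in \cite{Hidalgo:nodedSchottky, Hidalgo:nodedFuchsian}. An alternative, more self-contained route avoids limits altogether: build $G$ directly by a Klein--Maskit combination along a handlebody/pants structure of $S$ adapted to its nodes, in which case the delicate point becomes the compatibility of the chosen cut and pants systems with the dual graph of the pinched loops, i.e.\ choosing a Schottky handlebody structure on the partial normalization of $S$ in which every node is visible as a tangency of structure loops.
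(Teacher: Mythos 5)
The first thing to say is that the paper does not prove this theorem: it is imported verbatim from \cite{Hidalgo:nodedSchottky,Hidalgo:nodedFuchsian}, so your sketch has to stand on its own or reduce honestly to those papers. In part (1) the entire difficulty is concentrated in the step you take for granted, namely the existence of a fundamental domain bounded by $2g$ loops, pairwise disjoint except for tangencies at points of $P(G)$ and paired by a set of free generators. With the definition used in this paper (geometrically finite, torsion free, isomorphic to a free group of rank $g$), producing such a system of loops is essentially the structure theorem that part (1) encodes; note moreover that $\Omega(G)$ need not be connected and $G$ need not be a function group, so the standard structure theory of function groups cannot simply be quoted. Granting that domain, your tiling/tree argument, the genus count, and the stability observation (each part of $\Omega(G)/G$ is hyperbolic because $G$ is non-elementary for $g\geq 2$ and analytically finite by Ahlfors, the cases $g=0,1$ being checked by hand) are fine --- but as written part (1) is a proof modulo its own crux.

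Part (2) contains a concrete flaw in addition to a circularity. The prescription ``fix on $S$, away from $N(S)$, a cut system \dots and transport it to $S_{t}$'' forces every vanishing cycle to be disjoint from the cut system, and this is exactly the wrong constraint: what is needed is that each pinched curve be incompressible in the handlebody determined by the cut system, since only then do its lifts become arcs stabilized by loxodromic cyclic subgroups which can be made parabolic in the limit. Already for the stable surface of genus one your recipe fails: on the nodal torus the only essential simple loops in the smooth part are isotopic to the vanishing cycle, so the transported cut curve is the vanishing cycle itself; in the resulting rank-one Schottky groups $\langle z\mapsto q_{t}z\rangle$ pinching that curve means $q_{t}\to 0$, the trace blows up, and there is no limit in ${\rm PSL}_{2}({\mathbb C})$ --- the correct limit $\langle z\mapsto z+1\rangle$ arises only from a cut curve crossing the vanishing cycle, which cannot be chosen away from the node. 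In higher genus the same problem persists: disjointness plus homological independence does not prevent a node curve from dying in the handlebody. Furthermore, the assertion that the path $\{G_{t}\}$ has a limit which is a noded Schottky group with exactly the prescribed cusps is not a soft kernel-convergence fact; it is Maskit's pinching theorem \cite{M4} combined with the main construction of \cite{Hidalgo:nodedSchottky}, i.e.\ the very theorem being proved, so invoking the augmented Schottky space at that point is circular. Your alternative route (Klein--Maskit combination along a handlebody structure of $S$ in which every node is incompressible) is the right idea, but the existence of such a structure for an arbitrary stable surface is precisely the combinatorial lemma that would have to be proved, and it is not addressed.
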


\s
\noindent
\begin{rema}
The region of discontinuity of a Schottky group is always connected, but that of a noded Schottky group is not in general; connectivity only holds for its extended region of discontinuity.

\end{rema}

\s
\noindent
\begin{coro}
If $G$ is a noded Fuchsian group containing a noded Schottky group as a finite index normal subgroup, then $\Omega(G)^{ext}$ is connected and $\Omega(G)^{ext}/G$ consists of an stable Riemann orbifold.
\end{coro}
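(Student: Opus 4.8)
The plan is to reduce the statement about the noded Fuchsian group $G$ to the corresponding statement for its finite index normal noded Schottky subgroup $K$, which is already covered by part (1) of the Noded retrosection theorem, and then pass to the quotient by the finite group $G/K$. First I would invoke Selberg's lemma together with the hypothesis: by assumption $G$ contains a noded Schottky group $K$ of some rank $g$ as a finite index normal subgroup. As recalled in the excerpt (the discussion of noded Fuchsian groups and the finite index condition), the finite index of $K$ in $G$ forces $\Omega(K)=\Omega(G)$ and $P(K)=P(G)$, hence $\Omega(G)^{ext}=\Omega(K)^{ext}$. Now apply part (1) of the Noded retrosection theorem to $K$: this gives that $\Omega(K)^{ext}$ is connected and $\Omega(K)^{ext}/K$ is a stable Riemann surface of genus $g$. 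Combining these two facts, $\Omega(G)^{ext}$ is connected.

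Next I would analyze the quotient $\Omega(G)^{ext}/G$. Since $K \lhd G$ with $Q:=G/K$ finite, the group $Q$ acts on the noded Riemann surface $X:=\Omega(K)^{ext}/K=\Omega(G)^{ext}/K$ by automorphisms (in the cuspidal topology, using that $G$ acts as homeomorphisms on $\Omega(G)^{ext}$ preserving $\Omega(G)$ and $P(G)$ and acting conformally on $\Omega(G)$, as noted in the subsection on the extended region of discontinuity). Then
$$\Omega(G)^{ext}/G = \left(\Omega(G)^{ext}/K\right)\big/ (G/K) = X/Q.$$
By the excerpt's discussion of noded Riemann orbifolds, the quotient of a noded Riemann surface by a finite group of automorphisms is a noded Riemann orbifold, and when the noded surface is stable the quotient is by definition a stable Riemann orbifold. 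Since $X$ is a stable Riemann surface of genus $g$, the quotient $X/Q=\Omega(G)^{ext}/G$ is a stable Riemann orbifold, which is the claim. The cuspidal topology on $\Omega(G)^{ext}/G$ is the quotient topology and agrees with the orbifold structure because the $Q$-action is by cuspidal homeomorphisms.

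The only genuinely delicate point is checking that the finite quotient $X/Q$ is really a stable Riemann orbifold in the sense defined earlier, i.e. that nodes go to nodes and the action near a node is modelled on a finite group action on a pair of tangent discs; this is where one must use that every parabolic of $G$ is double-cusped (recalled for noded Fuchsian groups in the excerpt) so that the local picture at a point of $P(G)/G$ is the standard node picture, possibly with a cone point but—by the noded retrosection setting for $K$—without phantom nodes. I expect this local normal-form verification at the nodes to be the main obstacle; everything else is a formal passage through the index relation $\Omega(K)^{ext}=\Omega(G)^{ext}$ and the identification $(\Omega(G)^{ext}/K)/(G/K)=\Omega(G)^{ext}/G$.
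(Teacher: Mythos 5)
Your argument is exactly the one the paper intends: use the finite index of the noded Schottky subgroup $K$ to get $\Omega(K)^{ext}=\Omega(G)^{ext}$, apply part (1) of the noded retrosection theorem to $K$, and identify $\Omega(G)^{ext}/G$ with the quotient of the stable Riemann surface $\Omega(G)^{ext}/K$ by the finite group $G/K$ acting by cuspidal homeomorphisms that are conformal off the nodes. The ``delicate point'' you flag at the end is not actually an obstacle in this framework, since the paper \emph{defines} a stable Riemann orbifold to be the quotient of a stable Riemann surface by a finite group of automorphisms, so no local normal-form verification at the nodes is required.
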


\s

%%%%%%%%%%%%%%
\subsection{Simultaneous uniformization theorem with nodes}\label{Sec:nsut}
If $G$ is a torsion free purely loxodromic quasifuchsian group (i.e., a quasiconformal deformation of a torsion free Fuchsian group of the first kind without parabolics), then its region of discontinuity consists of two topological discs, say $D_{1}$ and $D_{2}$, and the quotients $D_{1}/G$ and $D_{2}/G$ are closed Riemann surfaces of the same genus $g \geq 2$.  Bers' simultaneous uniformization theorem \cite{Bers} asserts that, given any two closed Riemann surfaces $S_{1}$ and $S_{2}$ of the same genus $g \geq 2$, then it is possible to find $G$ as above so that we may assume that $S_{j}$ is isomorphic to $D_{j}/G$, for $j=1,2$.

Let $G$ be now a noded $\pi^{g}$ group ($g \geq 2$). If $G$ is purely loxodromic, then Maskit \cite{M3} has shown that $G$ is 
in fact a quasifuchsian group and we are as above. So, we are left to consider the case when $G$ contains parabolic transformations.

In \cite{Hidalgo:nodedFuchsian,M3} there provided examples of noded $\pi^{g}$ groups (with parabolic transformations) which are not quasifuchsian ones (the example in \cite{M3} is a B-group \cite{M1} and that in \cite{Hidalgo:nodedFuchsian} is a group without invariant components in its region of discontinuity).  But, if consider the extended region of discontinuity, then the situation is similar to the quasifuchsian case, as seen in the following.

\s 
\noindent
\begin{theo}[Simultaneous uniformization theorem with nodes \cite{Hidalgo:nodedFuchsian, Kra-Maskit}] 
\mbox{}
\begin{enumerate}
\item If $g \geq 2$ and $G$ is a noded $\pi^{g}$ group, then $\Omega(G)^{ext}$ consists
of exactly two simply-connected invariant components, and $\Omega(G)^{ext}/G$ consists of exactly
two stable Riemann surface of genus $g$.

\item If $S$ and $R$ are two stable Riemann surfaces of genus $g \geq 2$, then there is a 
noded $\pi^{g}$ group $G$ such that $\Omega(G)^{ext}/G$ is isomorphic to $S \cup R$.

\end{enumerate}
\end{theo}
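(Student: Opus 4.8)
In both parts the key mechanism is the passage from quasifuchsian groups to the cuspidal topology; full details are in \cite{Hidalgo:nodedFuchsian,Kra-Maskit}.

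\emph{Part (1).} Let $G$ be a noded $\pi^{g}$ group, so $G$ is geometrically finite, torsion free, isomorphic to $\pi_{1}$ of a closed orientable surface of genus $g\geq 2$, has no rank two parabolic subgroups and all its parabolics are double--cusped. The plan is to realize $G$ as a geometric limit of quasifuchsian $\pi^{g}$ groups $G_{n}$; this is where the noded Beltrami machinery of Section \ref{Sec:NodedBeltrami} enters, since $G$ lies in the augmented quasifuchsian deformation space of a fixed Fuchsian $\pi^{g}$ group and is therefore approximated by interior (quasifuchsian) points. For each $n$, Bers' simultaneous uniformization theorem gives $\Omega(G_{n})^{ext}=\Omega(G_{n})=D_{1}^{n}\sqcup D_{2}^{n}$ with $D_{i}^{n}$ Jordan domains and $D_{i}^{n}/G_{n}$ closed of genus $g$. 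I would then pass to the limit: the $D_{i}^{n}$ converge in the Carath\'eodory sense, and the cuspidal topology is precisely what is needed so that the points where the limiting domain touches itself become the parabolic fixed points of $P(G)$; the limits are two $G$--invariant components $\Delta_{1},\Delta_{2}$ of $\Omega(G)^{ext}$. One checks that no further components appear and that $\Omega(G)^{ext}=\Delta_{1}\cup\Delta_{2}$ by comparing with the fact that $G$ has an invariant component on each side, equivalently that the non-cuspidal part of $\mathbb{H}^{3}/G$ is a compact irreducible $3$--manifold with surface fundamental group, hence an orientable interval bundle $\Sigma_{g}\times[0,1]$ with exactly two horizontal boundary surfaces. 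Each $\Delta_{i}$ is simply connected in the cuspidal topology because the gluing pattern of the round pieces of $\Omega(G)$ along $P(G)$ is a tree, a cycle there exhibiting a free quotient of the surface group $G$, which is impossible. Finally $\Delta_{i}/G$ is a noded Riemann surface whose nodes are the classes of $P(G)$ on that side; since its normalization is analytically finite and an Euler characteristic count gives total genus $g$, it is obtained by pinching disjoint simple loops on a closed genus $g$ surface, i.e.\ a stable Riemann surface of genus $g$. This proves (1).

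\emph{Part (2).} Given stable Riemann surfaces $S$ and $R$ of genus $g\geq 2$, fix a common topological model: a closed orientable surface $\Sigma_{g}$ together with a family $\mathcal{C}$ of pairwise disjoint simple loops so that $S$ and $R$ are obtained by pinching $\Sigma_{g}$ along $\mathcal{C}$ with the prescribed conformal structures on the complementary pieces. Start from a quasifuchsian $\pi^{g}$ group $G_{0}$ with $\Omega(G_{0})/G_{0}$ isomorphic to the pair of smooth surfaces obtained from $S$ and $R$ by filling in their nodes (Bers). Then I would follow a path $G_{t}$ of quasifuchsian deformations of $G_{0}$ that shrinks to zero the lengths of the geodesics representing the loops of $\mathcal{C}$ on both invariant discs simultaneously; using the completeness of the augmented Teichm\"uller space and the noded Beltrami coefficient description of its points, this path has a limit $G=\lim G_{t}$ which is a noded $\pi^{g}$ group, and by construction $\Omega(G)^{ext}/G$ is isomorphic to $S\cup R$. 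An alternative is to assemble $G$ directly from triangle--group and finite--area pieces by the Maskit combination theorems and to verify geometric finiteness and the isomorphism type by hand.

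\emph{Main obstacle.} In both parts the delicate point is the control of the geometric limit through the cuspidal topology: one must show that adjoining the parabolic fixed points produces exactly two components of $\Omega(G)^{ext}$, with no merging and no loss of simple connectivity, and, in Part (2), that the limit group acquires neither rank two cusps nor geometrically infinite ends, so that it is genuinely a noded Fuchsian group. This is exactly the content established via the noded Beltrami machinery and the structure results recalled above; granting those, the convergence statements and the identification of the quotients become routine.
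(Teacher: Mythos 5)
You should note at the outset that the paper does not prove this theorem: it is imported verbatim from \cite{Hidalgo:nodedFuchsian,Kra-Maskit}, so your sketch is a reconstruction of those proofs, and the pinching/noded-deformation mechanism you invoke is indeed the one the paper's own machinery (Section \ref{Sec:NodedBeltrami}, Propositions \ref{propo3} and \ref{propo4}, Theorem \ref{Teo1}, Corollary \ref{coro4}) is designed to implement. The genuine gap is in your Part (2). You fix a single family $\mathcal{C}$ and shrink its geodesics on both invariant discs simultaneously. First, arbitrary stable surfaces $S$ and $R$ of genus $g$ need not have the same nodal topological type (one may be smooth, the other maximally pinched), so a common $\mathcal{C}$ does not exist in general; you need two systems $\mathcal{C}_{S},\mathcal{C}_{R}$. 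Second, and more seriously, a single free homotopy class cannot serve as a node class on both sides. As the paper recalls, the nodes of $\Omega(G)^{ext}/G$ are in bijection with the $G$-orbits of points of $P(G)$; all fixed points of conjugates of a primitive parabolic $g_{\gamma}$ form one $G$-orbit (and the stabilizer of such a point is a single cyclic group, there being no rank two parabolic subgroups), so each primitive parabolic conjugacy class produces exactly one node --- equivalently, after normalizing $g_{\gamma}(z)=z+1$, the invariant cusp discs at $\infty$ are half-planes bounded by horizontal lines, so at most two disjoint ones exist. A class of $\mathcal{C}$ pinched on both the $S$-side and the $R$-side would have to account for two distinct nodes, i.e.\ four inequivalent cusp discs at one fixed point, which is impossible; hence the limit of your path cannot be a noded $\pi^{g}$ group with $\Omega(G)^{ext}/G\cong S\cup R$. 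The repair, which is what the cited construction actually does, is to choose $\mathcal{C}_{S}$ and $\mathcal{C}_{R}$ with pairwise non-conjugate classes (always possible for the unmarked statement, e.g.\ replace $\mathcal{C}_{R}$ by its image under a high power of a suitable Dehn twist or a pseudo-Anosov map) and to pinch $\mathcal{C}_{S}$ only in one invariant disc and $\mathcal{C}_{R}$ only in the other.

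Part (1) of your plan also has soft spots. Declaring that every noded $\pi^{g}$ group lies in the closure of the quasifuchsian deformation space is essentially the statement to be proved; the references derive it from the structure theory of geometrically finite groups rather than assume it. Your auxiliary claim that ``$G$ has an invariant component on each side'' is false at the level of $\Omega(G)$: the paper explicitly recalls the example from \cite{Hidalgo:nodedFuchsian} of a noded $\pi^{g}$ group with no invariant component of its region of discontinuity; two-sided invariance only holds for $\Omega(G)^{ext}$, i.e.\ after the theorem is known. The compact-core/I-bundle argument is the right substitute, but the step you omit is exactly the delicate one, and it is the same issue as above: one must show that each doubly-cusped parabolic has both of its cusp discs attached to the same side (the cusp annuli are horizontal in $\Sigma_{g}\times[0,1]$, not spanning annuli), for otherwise adjoining the fixed point would merge the two putative components of $\Omega(G)^{ext}$ into one. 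Finally, ``the gluing pattern is a tree, since a cycle gives a free quotient'' is not correct as a reason for simple connectivity: downstairs the dual graph of a stable surface very often has cycles (pinch a nonseparating curve), so the tree property is a statement about the components of $\Omega(G)$ inside one component of $\Omega(G)^{ext}$ upstairs, and it needs an actual argument (planarity of the component together with the structure of the component stabilizers), not the nonexistence of free quotients of $G$. Once these points are supplied --- which is precisely the content of \cite{Hidalgo:nodedFuchsian} and \cite{Kra-Maskit} --- the rest of your outline is sound.
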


\s

As a consequence, the same fact holds for those noded Fuchsian groups containing noded $\pi^{g}$ groups.

\s
\noindent
\begin{coro}
Let $G$ be a noded Fuchsian group, containing a noded $\pi^{g}$ group as a finite index normal subgroup. Then $\Omega(G)^{ext}$ consists of two simply-connected invariant components, and $\Omega(G)^{ext}/G$ consists of exactly two stable Riemann orbifolds.
\end{coro}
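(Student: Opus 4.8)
The plan is to descend to the normal subgroup $K$, where the heavy lifting has already been carried out by the simultaneous uniformization theorem with nodes, and then to push the resulting structure back up to $G$. First I would record that, since $K$ is a normal, finite index subgroup of $G$, one has $\Omega(K)=\Omega(G)$ and $P(K)=P(G)$, and hence $\Omega(K)^{ext}=\Omega(G)^{ext}$; this is precisely the finite-index observation already used above for noded Fuchsian groups. Applying the simultaneous uniformization theorem with nodes to the noded $\pi^{g}$ group $K$ then yields that $\Omega(G)^{ext}$ has exactly two simply-connected components, say $\Delta_{1}$ and $\Delta_{2}$, each invariant under $K$, with $\Delta_{1}/K$ and $\Delta_{2}/K$ stable Riemann surfaces of genus $g$.

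Next I would analyze the action of $G$. Since $G$ acts on $\Omega(G)^{ext}$ by homeomorphisms for the cuspidal topology, it permutes the two connected components, and because $K$ is normal in $G$ and fixes each $\Delta_{j}$, this descends to an action of the finite group $G/K$ on the two-element set $\{\Delta_{1},\Delta_{2}\}$. The crucial point to establish — and the step I expect to be the real obstacle — is that this action is \emph{trivial}, i.e.\ that each $\Delta_{j}$ is invariant under all of $G$, not merely under $K$. I would deduce this from the hypothesis that $G$ is a noded Fuchsian group: realizing $G$ as a geometric limit of quasiconformal deformations $w_{t}\widetilde{G}w_{t}^{-1}$ of a Fuchsian group $\widetilde{G}$ of the first kind, each such deformation keeps invariant the two quasidiscs $w_{t}({\mathbb D})$ and $w_{t}(\widehat{\mathbb C}-\overline{\mathbb D})$, and this invariance persists in the limit; equivalently, were some $h\in G$ to interchange $\Delta_{1}$ and $\Delta_{2}$, then $\Omega(G)^{ext}/G$ would be a single connected stable orbifold whose orbifold fundamental group is the index-two stabilizer of $\Delta_{1}$, which is incompatible with $G$ being isomorphic to a Fuchsian group of the first kind (for which $\Omega^{ext}$ splits into two invariant simply-connected components). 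Making this argument airtight within the framework of noded Fuchsian groups is where the care is needed; the rest is formal.

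Finally, granting the $G$-invariance of each $\Delta_{j}$, I would conclude as follows. Because $G$ normalizes $K$ and acts by M\"obius transformations, the finite group $G/K$ acts on each noded Riemann surface $\Delta_{j}/K$ by automorphisms (biholomorphic off the nodes, carrying nodes to nodes and respecting cone orders). Hence $\Delta_{j}/G=(\Delta_{j}/K)/(G/K)$ is the quotient of a stable Riemann surface of genus $g$ by a finite group of automorphisms, which is exactly the notion of stable Riemann orbifold introduced earlier. Since $\Omega(G)^{ext}/G=\Delta_{1}/G\ \sqcup\ \Delta_{2}/G$, this exhibits $\Omega(G)^{ext}$ as having two simply-connected invariant components and $\Omega(G)^{ext}/G$ as consisting of exactly two stable Riemann orbifolds, as claimed.
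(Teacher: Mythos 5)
Your overall route coincides with the paper's: since $K$ is normal and of finite index in $G$, one has $\Omega(K)=\Omega(G)$ and $P(K)=P(G)$, hence $\Omega(G)^{ext}=\Omega(K)^{ext}$; the simultaneous uniformization theorem with nodes applied to the noded $\pi^{g}$ group $K$ gives the two simply-connected components $\Delta_{1},\Delta_{2}$ with stable quotients of genus $g$, and then the finite group $G/K$ turns the stable Riemann surfaces $\Delta_{j}/K$ into stable Riemann orbifolds $\Delta_{j}/G$. This is exactly how the paper obtains the corollary, which it states as an essentially immediate consequence of the theorem together with the finite-index observation.

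The step you correctly single out --- that no element of $G$ interchanges $\Delta_{1}$ and $\Delta_{2}$ --- is, however, not established by either of the two arguments you sketch, so this is where your proposal has a genuine gap. Your second argument is circular: the splitting of $\Omega^{ext}$ into two \emph{invariant} simply-connected components is a property of a group's action on $\widehat{\mathbb C}$, not of its abstract isomorphism class, and the hypothesis only says that $G$ is isomorphic to a Fuchsian group of the first kind; one cannot transfer the invariance through an abstract isomorphism, since that transfer is precisely what has to be proved. Your first argument replaces the paper's definition of a noded Fuchsian group (geometrically finite and isomorphic to a Fuchsian group of the first kind) by the assertion that $G$ is a geometric limit of quasiconformal deformations of a Fuchsian group; that realization is a theorem coming from \cite{Hidalgo:nodedSchottky,Hidalgo:nodedFuchsian,Kra-Maskit}, not the definition, and even granting it one would still have to justify that component-invariance persists under the limit. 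A way to argue directly from the definition is algebraic: if some $h\in G$ swapped $\Delta_{1}$ and $\Delta_{2}$, then conjugation by $h$ on $K\cong\pi_{1}$ of a closed orientable genus-$g$ surface would be induced (after identifying the two quotient surfaces) by an orientation-reversing homeomorphism, hence would act by $-1$ on $H_{2}(K)\cong{\mathbb Z}$; whereas in a Fuchsian group of the first kind every conjugation is induced by an orientation-preserving self-map of ${\mathbb D}$ and so acts by $+1$ on $H_{2}$ of any finite-index surface subgroup --- contradicting that $G$ is isomorphic to such a group. (In the paper's actual application in Section \ref{Sec:nodedpairs} the issue does not arise, because the groups $\theta(G)$ there come equipped with the invariant quasidisc $w_{\mu}(\overline{\mathbb D})$.) As it stands, your proposal identifies the right obstacle but does not overcome it.
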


\s

The above result was proved by Abikoff in \cite{Ab1} for the case
of cusps, that is, for regular B-groups.

%%%%%%%%%%%%%%%%%%%%%%%%%%%%%
%%%%%%%%%%%%%%%%%%%%%%%%%%%%%
\section{Some preliminaries on Teichm\"uller spaces of Kleinian groups}\label{Sec:prelim2}
In this section, $G$ will be a finitely generated Kleinian group, with non-empty region of discontinuity $\Omega(G)$, and $\Delta \subset \Omega(G)$ will be a non-empty $G$-invariant collection of components of $\Omega(G)$, that is, every $\gamma \in G$ permutes the connected components of $\Delta$.

We proceed to recall the quasiconformal deformation space $T(\Delta,G)$ of $G$ supported at $\Delta$. In the case that $G$ is Fuchsian group of the first kind acting on the unite disc ${\mathbb D}$, then $T({\mathbb D},G)$ is a model for the Teichm\"uller space of the Riemann orbifold ${\mathbb D}/G$.

%%%%%%%%%%%%%%%%%%
\subsection{Quasiconformal homeomorphisms}
Let us consider the Banach space $L^{\infty}(\widehat{\mathbb C})$ of measurable maps $\mu:\widehat{\mathbb C} \to {\mathbb C}$,
with the essential supreme norm $\|\cdot\|_{\infty}$. We denote its unit ball as 
$L^{\infty}_{1}(\widehat{\mathbb C})$.

An orientation preserving homeomorphism $w:\widehat{\mathbb C} \to
\widehat{\mathbb C}$ is called a {\it quasiconformal homeomorphism} if there is some 
 $\mu \in L^{\infty}_{1}(\widehat{\mathbb C})$
(called a {\it complex dilatation} of $w$) such that $w$ has distributional partial
derivatives $\partial w$, $\overline{\partial} w$ in
$L^{2}_{loc}(\widehat{\mathbb C})$ satisfying the Beltrami equation
$$\overline{\partial}w(z)=\mu(z)\partial w(z), \quad \mbox{ a.e. $z \in
\widehat{\mathbb C},$ }$$
where $L^{2}_{loc}(\widehat{\mathbb C})$ means $L^{2}$ on compacts in $\widehat{\mathbb C}$.

The existence and uniqueness of quasiconformal homeomorphisms is due to Morrey \cite{Morrey}.

\s

\begin{theo}[Measurable Riemann mapping theorem \cite{A-B,Morrey}]\label{A-B}
If $\mu \in L^{\infty}_{1}(\widehat{\mathbb C})$, then there exists, and it is  unique,
a quasiconformal homeomorphism $w_{\mu}:\widehat{\mathbb C} \to \widehat{\mathbb C}$, with complex dilation $\mu$, and fixing $\infty$, $0$ and $1$. Moreover, if $\mu$ vary continuously or real-analytically or holomorphically and $z_0$ is a fixed point, then $w_{\mu}(z_0)$ varies also in the same way.
\end{theo}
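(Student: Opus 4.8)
The plan is to reduce everything to the case of a Beltrami coefficient with compact support and then bring in the theory of two-dimensional singular integral operators. First I would observe that post-composition by a M\"obius transformation does not change the complex dilatation, so it suffices to produce \emph{some} quasiconformal $w$ with dilatation $\mu$ and afterwards compose with the unique M\"obius map carrying $w(\infty), w(0), w(1)$ to $\infty, 0, 1$; the uniqueness of $w_{\mu}$ is a separate (easy) point handled below. To remove the support hypothesis, I would first solve the problem for $\mu$ replaced by $\mu\,\chi_{\mathbb D}$, getting a quasiconformal $f_{1}$ which is conformal off $\overline{\mathbb D}$; the residual Beltrami coefficient needed to correct $f_{1}$ into a solution for the full $\mu$ is then supported in $f_{1}(\widehat{\mathbb C}\setminus\overline{\mathbb D})$, a neighborhood of a single point, and after conjugating by $z \mapsto 1/z$ this becomes another compactly supported problem. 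Composing the two solutions produces a quasiconformal map with dilatation $\mu$. Thus the crux is the compactly supported case.

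For $\mu$ with support in a fixed disc and $\|\mu\|_{\infty}=k<1$, I would seek a solution of the form $w(z)=z+(Ph)(z)$, where $P$ is the Cauchy (Pompeiu) transform $Ph(z)=-\tfrac{1}{\pi}\iint \tfrac{h(\zeta)}{\zeta-z}\,dA(\zeta)$ and $h\in L^{p}$ for a suitable $p>2$. Using the operator identities $\overline{\partial}(Ph)=h$ and $\partial(Ph)=Th$, where $T$ is the Beurling transform (a Calder\'on--Zygmund operator which is an isometry on $L^{2}$), the Beltrami equation becomes $h=\mu+\mu\, T h$, that is $(I-\mu T)h=\mu$. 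The essential analytic input is that the $L^{p}$-operator norm of $T$ tends to $1$ as $p\to 2$, so one may fix $p>2$ with $k\,\|T\|_{p}<1$; then $I-\mu T$ is invertible on $L^{p}$ and $h=\sum_{n\ge 0}(\mu T)^{n}\mu$ converges. One then verifies that $w=z+Ph$ is a homeomorphism of $\widehat{\mathbb C}$ (injectivity by a degree/argument-principle argument exploiting that $w$ is locally quasiconformal and proper), that its distributional derivatives lie in $L^{2}_{\mathrm{loc}}$, and that it satisfies the Beltrami equation almost everywhere.

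For uniqueness, suppose $w_{1}$ and $w_{2}$ both have dilatation $\mu$ and fix $\infty,0,1$. Then $w_{1}\circ w_{2}^{-1}$ is $1$-quasiconformal, hence conformal by Weyl's lemma, hence M\"obius; fixing three points it is the identity, so $w_{1}=w_{2}$. For the dependence statement, I would note that on the open set $\{\mu:\ \|\mu\|_{\infty}\,\|T\|_{p}<1\}\subset L^{\infty}$ the map $\mu\mapsto (I-\mu T)^{-1}$ is given by a norm-convergent power series in the bounded operator $\mu T$, hence is real-analytic — and, under complexification of the parameter, holomorphic — as a map into bounded operators on $L^{p}$. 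Consequently $h$, then $Ph$, then $z_{0}\mapsto z_{0}+(Ph)(z_{0})$, and finally $w_{\mu}(z_{0})$ after the M\"obius normalization (which depends rationally, hence real-analytically/holomorphically, on the three image points as long as they stay distinct, which they do since $w$ is a homeomorphism) inherit the same regularity — continuous, real-analytic, or holomorphic — as $\mu$.

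The main obstacle, and the one classical point deserving genuine care, is the sharp norm bound $\lim_{p\to 2}\|T\|_{L^{p}(\mathbb C)}=1$ for the Beurling transform, which is what makes the Neumann series converge for \emph{every} $k<1$; a secondary technical point is the passage from "solution of the PDE" to "bona fide homeomorphism of the sphere" and the continuity of $w$, which require the Sobolev/$L^{p}$ regularity theory of $P$ and $T$ together with a careful degree argument.
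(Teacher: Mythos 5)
The paper does not prove this statement at all—it is quoted as the classical measurable Riemann mapping theorem with references to Ahlfors--Bers and Morrey—and your outline (reduction to compactly supported $\mu$, the ansatz $w=z+Ph$ with $h=(I-\mu T)^{-1}\mu$ solved by a Neumann series in $L^{p}$ for some $p>2$ using $\|T\|_{p}\to 1$, Weyl's lemma for uniqueness, and analytic dependence read off from the series) is exactly the Ahlfors--Bers argument the citation points to. So your proposal is correct and follows essentially the same route as the paper's source.
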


\s
%%%%%%%%%%%%%%%%%%%
\subsection{Beltrami coefficients for $G$ supported at $\Delta$}
The discreteness property of $G$ asserts that 
$$L^{\infty}(\Delta,G)=
\left\{\mu \in L^{\infty}(\widehat{\mathbb C}): \; \mu(\gamma(z)) \overline{\gamma'(z)} = \mu(z) \gamma'(z), \;
{\rm a.e.}\; \Delta,\; \mbox{for all} \; \gamma \in G, \; \mu(z)=0,\; z \in \Delta^{c}\right\}$$
is a closed subspace of $L^{\infty}(\widehat{\mathbb C})$; so it is a Banach space with essential supreme norm $\|\cdot\|_{\infty}$. 
Let us denote its unit ball as 
$$L^{\infty}_{1}(\Delta,G)=L^{\infty}(\Delta,G) \cap L^{\infty}_{1}(\widehat{\mathbb C})=\left\{\mu \in L^{\infty}(\Delta,G); \;
\|\mu\|_{\infty} < 1\right\}.$$

The measurable functions in $L^{\infty}_{1}(\Delta,G)$ are called the 
{\it Beltrami coefficients for $G$ supported in $\Delta$}. 

The following lemma is a 
classical result and its proof can be found, for instance, in \cite{L}.

\s
\noindent
\begin{lemm} \label{lema1}
Let  $\mu \in L^{\infty}_{1}(\Delta,G)$ and let
$w:\widehat{\mathbb C} \to \widehat{\mathbb C}$
be a quasiconformal homeomorphism with complex dilatation $\mu$.
Then $w G w^{-1}$ is a Kleinian group with the
region of discontinuity $w(\Omega(G))$. Moreover, $w(\Delta)$ is a $W G w^{-1}$-invariant collection of components.
\end{lemm}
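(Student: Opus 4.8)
The plan is to check the three assertions — that $wGw^{-1}$ is discrete (hence Kleinian), that its region of discontinuity is $w(\Omega(G))$, and that $w(\Delta)$ is invariant under $wGw^{-1}$ — in that order, the first being the substantive point and the other two essentially formal consequences.

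First I would record the key compatibility: the invariance condition defining $L^{\infty}(\Delta,G)$, namely $\mu(\gamma(z))\overline{\gamma'(z)}=\mu(z)\gamma'(z)$ a.e., is exactly the statement that for every $\gamma\in G$ the map $\gamma$ is itself $\mu$-conformal where $\mu$ is supported, and $\gamma$ is conformal (hence trivially $\mu$-conformal since $\mu=0$ there) on $\Delta^{c}$. Consequently, for each fixed $\gamma\in G$, the composition $w\circ\gamma$ has the same complex dilatation $\mu$ as $w$: one computes the dilatation of $w\circ\gamma$ by the chain rule for Beltrami coefficients and uses the invariance identity to see it equals $\mu$ a.e. By the uniqueness clause of the measurable Riemann mapping theorem (Theorem \ref{A-B}), two quasiconformal homeomorphisms of $\widehat{\mathbb C}$ with the same complex dilatation differ by a M\"obius transformation; hence $w\circ\gamma\circ w^{-1}$ is a M\"obius transformation, call it $\gamma^{*}$. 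This shows $wGw^{-1}\subset\mathrm{PSL}_{2}(\mathbb C)$, and the map $\gamma\mapsto\gamma^{*}$ is a group homomorphism, so $wGw^{-1}$ is a subgroup.

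Discreteness then follows because $w$ is a homeomorphism of $\widehat{\mathbb C}$: conjugation by a fixed homeomorphism is a topological conjugacy, so $G$ acts properly discontinuously on an open set $U$ if and only if $wGw^{-1}$ acts properly discontinuously on $w(U)$. Applying this with $U=\Omega(G)$, which is non-empty by hypothesis, gives that $wGw^{-1}$ acts properly discontinuously on the non-empty open set $w(\Omega(G))$, so $wGw^{-1}$ is discrete, i.e.\ Kleinian, and $w(\Omega(G))\subset\Omega(wGw^{-1})$. For the reverse inclusion, note that $w$ conjugates the $G$-action on $\widehat{\mathbb C}$ to the $wGw^{-1}$-action homeomorphically, so a point lies in the region of discontinuity of one group exactly when its $w$-image lies in that of the other; hence $w(\Omega(G))=\Omega(wGw^{-1})$ (equivalently $w(\Lambda(G))=\Lambda(wGw^{-1})$). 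Finally, for the last assertion: $\Delta$ is a union of connected components of $\Omega(G)$ permuted by $G$; since $w$ is a homeomorphism it carries components of $\Omega(G)$ to components of $w(\Omega(G))=\Omega(wGw^{-1})$, and the relation $\gamma^{*}\circ w=w\circ\gamma$ shows that $\gamma^{*}(w(\Delta))=w(\gamma(\Delta))=w(\Delta)$ for all $\gamma\in G$; so $w(\Delta)$ is a $wGw^{-1}$-invariant union of components.

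The only step requiring genuine care is the identification of the complex dilatation of $w\circ\gamma$ with $\mu$, since this uses that $\gamma$, being conformal, composes with $w$ without changing the dilatation precisely on account of the transformation rule in the definition of $L^{\infty}(\Delta,G)$; everything downstream is a formal application of uniqueness in Theorem \ref{A-B} together with the fact that topological conjugacy preserves proper discontinuity. (The typo "$W G w^{-1}$" in the statement should of course read $wGw^{-1}$.)
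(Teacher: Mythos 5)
Your argument is correct and is exactly the classical one: the $G$-invariance of $\mu$ makes $w\circ\gamma$ and $w$ have the same dilatation, uniqueness in the measurable Riemann mapping theorem makes $w\gamma w^{-1}$ M\"obius, and topological conjugacy by the homeomorphism $w$ transfers proper discontinuity, giving discreteness, $\Omega(wGw^{-1})=w(\Omega(G))$, and the invariance of $w(\Delta)$. The paper does not prove the lemma itself but defers to the literature (Lehto's book), where this same standard argument appears, so there is no substantive difference in approach.
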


\s
%%%%%%%%%%%%%%%%%%%%%%
\subsection{Teichm\"uller and moduli spaces}
Observe that, by the measurable Riemann mapping theorem, if $\mu \in L^{\infty}_{1}(\Delta,G)$ and $w_{1}, w_{2}$ are quasiconformal homeomorphisms, both with complex dilation $\mu$, then there is a M\"obius transformation $A \in {\rm PSL}_{2}({\mathbb C})$ so that $w_{2}=A w_{1}$.

Let  $\mu,\nu \in L^{\infty}_{1}(\Delta,G)$ be two Beltrami coefficients for $G$ supported in $\Delta$. If $w_{\mu}$ (respectively $w_{\nu}$) is a 
quasiconformal homeomorphism with complex dilation $\mu$ (respectively $\nu$), then (by Lemma \ref{lema1}) there is a natural isomorphism $\theta_{\mu}:G \to w_{\mu} G w_{\mu}^{-1}$ (respectively $\theta_{\nu}:G \to w_{\nu} G w_{\nu}^{-1}$). We say that $\mu$ and $\nu$ are {\it Teichm\"uller equivalent} (respectively, {\it isomorphic})
if there is $A \in {\rm PSL}_{2}({\mathbb C})$ with 
$\theta_{\mu}(\gamma)=A \theta_{\nu}(\gamma) A^{-1}$, for all $\gamma \in G$ (respectively, $\theta_{\mu}(G)=A \theta_{\nu}(G) A^{-1}$).

The space $T(\Delta,G)$ (respectively,  ${\mathcal M}(\Delta,G)$) of Teichm\"uller (respectively, isomorphic) equivalence classes of Beltrami coefficients for $G$ supported in $\Delta$ is called the {\it Teichm\"uller space} (respectively,  {\it moduli space}) {\it of $G$  supported in $\Delta$}.

The {\it modular group of $G$ supported at $\Delta$} is the group $M(\Delta,G)$ given by the isotopy classes of quasiconformal homeomorphisms $w:\widehat{\mathbb C} \to \widehat{\mathbb C}$, with complex dilation in $L^{\infty}_{1}(\Delta,G)$, so that $w G w^{-1}=G$. There is the natural action
$$M(\Delta,G) \times T(\Delta,G) \to T(\Delta,G): ([w],[\mu]) \mapsto [\nu],$$
where $\nu \in L^{\infty}_{1}(\Delta,G)$ is complex dilation of the quasiconformal homeomorphism $w_{\mu} w^{-1}$.

It is well known that $T(\Delta,G)$ is a finite dimensional complex manifold (simply-connected in the case that $\Delta$ is connected and simply-connected), that $M(\Delta,G)$ acts as a discrete group of its holomorphic automorphisms and that ${\mathcal M}(\Delta,G)=T(\Delta,G)/M(\Delta,G)$ is a complex orbifold of same dimension as $T(\Delta,G)$ (generalities on this can be found, for instance, in \cite{Bers2,Kra,M5,N}).

%%%%%%%%%%%%%%%%
\subsection{The Fuchsian case} \label{Sec:Fuchsian}
Let us assume that $G$ is a Fuchsian group of the first kind 
acting on the unit disc ${\mathbb D}$; so the Riemann orbifold ${\mathbb D}/G$ has signature $(\gamma;m_{1},\ldots,m_{r})$. In this case $T({\mathbb D},G)$ is a simply-connected complex manifold of dimension $3\gamma-3+r$ (see, for instance, the book \cite{N}) and, moreover, this space is a model for the Teichm\"uller space of the Riemann orbifold ${\mathbb D}/G$.

If $\Gamma$ is a finite index subgroup of $G$, then the natural inclusion $L^{\infty}_{1}({\mathbb D},G) \subset L^{\infty}_{1}({\mathbb D},\Gamma)$ induces a holomorphic embedding $T({\mathbb D},G) \subset T({\mathbb D},\Gamma)$. Also, in this case, the subgroup $M(\Gamma,G)$ of $M({\mathbb D},G)$, formed by those isotopy classes of quasiconformal homeomorphisms for which  $w \Gamma w^{-1}=\Gamma$, has finite index. So the complex orbifold $T({\mathbb D},G)/M(\Gamma,G)$ provides a finite degree branched cover of moduli space ${\mathcal M}({\mathbb D},G)$.

\s
\noindent
\begin{rema}\label{obs2}
Let $G$ a finitely generated Kleinian group and let $\Delta$ a $G$-invariant collection of connected components of $\Omega(G) \neq \emptyset$.
Let $\Delta_{1},\ldots,\Delta_{r}$ be a maximal collection of non-$G$-conjugate components of $\Delta$. Let $G_{j}$ be the $G$-stabilizer of $\Delta_{j}$ and  
let $G(\Delta_{j})$ be the union of all components of $\Delta$ which are $G$-conjugate to $\Delta_{j}$. As a consequence of results of Kra \cite{Kra} and Sullivan \cite{Sullivan}, it was observed by Kra and Maskit in \cite{K-M} that 
$$T(\Delta,G)=T(G,G(\Delta_{1})) \times \cdots \times T(G,G(\Delta_{r}))$$
and that its universal cover space is 
$$T({\mathbb D},F_{1})\times \cdots \times T({\mathbb D},F_{r}),$$ 
where $F_{j}$ is a Fuchsian group acting on ${\mathbb D}$ so that ${\mathbb D}/F_{j} \cong \Delta_{j}/G_{j}$. This in particular asserts that that the statement at the end of Section \ref{Sec:Fuchsian} still valid for $G$.
\end{rema}

\s

%%%%%%%%%%%%%%%%%%%%
%%%%%%%%%%%%%%%%%%%%
\section{Noded quasiconformal deformation spaces of Kleinian groups}\label{Sec:NodedBeltrami}
If $G$ is a Fuchsian group of the first kind acting on the unit disc ${\mathbb D}$, many different compactifications of $T({\mathbb D},G)$ have been provided. For instance, Bers' compactification \cite{Bers3} is obtained by holomorphically embedding $T({\mathbb D},G)$ into the space of quadratic holomorphic forms on ${\mathbb D}/G$ (this being a finite dimensional complex vector space) and taking its closure in there, and Thurston's compactification \cite{Thurston1988} is obtained by taking hyperbolic lengths at simple closed geodesics, which provides a holomorphic embedding into an infinite-dimensional projective space. On Ber's compactifications it is no possible to extend continuously the action of the corresponding modular group. There is not a natural relation between these compactifications and Deligne-Mumford's compactification. A partial closure of $T({\mathbb D},G)$, called the {\it Augmented Teichm\"uller space} $\widehat{T}({\mathbb D},G)$, was constructed by Bers \cite{Bers3} (see also W. Abikoff in \cite{Abikoff}). This space is a non-compact Hausdorff space over which the modular group $M({\mathbb D},G)$ extends continuously and so that $\widehat{T}({\mathbb D},G)/M({\mathbb D},G)$ coincides with Deligne-Mumford's compactification. The added points to $\widehat{T}({\mathbb D},G)$ are certain regular b-groups (geometrically finite Kleinian groups with an invariant simply-connected connected of its region of discontinuity \cite{M1})

If $\Delta$ is a collection of $G$-invariant components of $\Omega(G)$, where $G$ is a finitely generated Kleinian group, then
in \cite{Hidalgo-Vasiliev} we have constructed a partial closure $NT(\Delta,G)$ of the Teichm\"uller space $T(\Delta,G)$ so that the boundary points correspond in a natural way to the boundary points in the Deligne-Mumford compactification of moduli space of $\Delta/G$. In the particular case that $G$ is Fuchsian of the first kind and $\Delta={\mathbb D}$, the partial closure $NT({\mathbb D},G)$ coincides with the augmented Teichm\"uller space. The extra points we add to $T(\Delta,G)$ produce, in terms of Kleinian groups, stable Riemann orbifolds  and they correspond to deformations of the group $G$ by the process of approximation of double-cusped parabolic elements of $G$ by certain {\it primitive simple loxodromic} ones  (see the works of Keen, Series and Maskit in \cite{KMS}, and of Maskit in \cite{M2,M4}). The deformation is produced by some boundary points of $L^{\infty}_{1}(G,\Delta)$, called {\it noded Beltrami differentials for $G$}. At the level of the Riemann orbifold $\Delta/G$ this means that we permit certain pairwise disjoint simple closed geodesics and maybe some simple geodesic arcs connecting conical values of order $2$ to degenerate to points in order to produce a finite collection of noded Riemann orbifolds.  The loops and arcs which we consider in the degeneration process are the projections of appropriately chosen axes of the primitive simple loxodromic elements of the group that approach  doubly-cusped parabolic transformations. 

In this section we will assume that $G$ is a finitely generated Kleinian group, with non-empty region of discontinuity, and that $\Delta$ a non-empty $G$-invariant collection of connected components of its region of discontinuity, and we proceed to recall the construction of $NT(\Delta,G)$ as done in \cite{Hidalgo-Vasiliev}.

%%%%%%%%%%%%%
\subsection{Region of discontinuity of Beltrami forms}
For each $\mu \in L^{\infty}(\widehat{\mathbb C})$ we define its {\it region of discontinuity} $\Omega(\mu)$  as the set of all points $p \in \widehat{\mathbb C}$ for which there is 
an open neighborhood $U$ of $p \in U$ so that $\|\mu|_{U}\|_{\infty} <1$.
Its complement $\Lambda(\mu)=\widehat{\mathbb C}-\Omega(\mu)$ is the {\it limit set}
of $\mu$. By  the definition, the set $\Omega(\mu)$ is open and $\Lambda(\mu)$ is
compact.

If $\mu \in \overline{L^{\infty}_{1}(\Delta,G)}$, the closure of the unit ball 
$L^{\infty}_{1}(\Delta,G)$ inside the Banch space $L^{\infty}(\Delta,G)$, then
in \cite{Hidalgo-Vasiliev} it was observed that both $\Omega(\mu)$ and $\Lambda(\mu)$ are $G-$invariant.
Also, by the $G$-invariance of $\Omega(\mu)$ and $\Omega(G)$, the open set $\Omega:=\Omega(\mu) \cap \Omega(G)$ is $G$-invariant. Observe that, as $\mu$ is equal to zero on the complement of $\Delta$, all connected components of $\Omega(G)-\Delta$ are necessarily contained in $\Omega$.

%%%%%%%%%%%%%%%%%%%%%
\subsection{Noded quasiconformal maps for $(\Delta,G)$}
If $V \subset \widehat{\mathbb C}$ is a non-empty open set, then $L^{2,1}_{loc}(V)$ denotes the complex vector space of maps $w:V \to \widehat{\mathbb C}$ with locally integrable distributional derivatives. 

Let $\mu \in \overline{L^{\infty}_{1}(\Delta,G)}$, where $\Omega(\mu) \neq \emptyset$. An orientation-preserving map $w \in L^{2,1}_{loc}(\Omega(\mu))$ is a {\it noded quasiconformal map for $(\Delta,G)$} with dilatation $\mu$ if the following hold:
\begin{itemize}
\item[(i)] there is a component of $\Omega(\mu)$ homeomorphically mapped by $w$ onto its image;
\item[(ii)] $ \overline{\partial} w(z) = \mu(z) \partial w(z), \,\,\, \mbox{\rm a.e. in $\Omega(\mu)$}$;
\item[(iii)] there is a sequence $\mu_{n} \in L^{\infty}_{1}(\Delta,G)$,
converging to $\mu$ almost everywhere in $\Omega(\mu)$;
\item[(iv)] there is a sequence $w_{n}:\widehat{\mathbb C} \to \widehat{\mathbb C}$ of
quasiconformal homeomorphisms with complex dilatations $\mu_{n}$, converging to $w$
locally uniformly in $\Omega(\mu)$.
\end{itemize}

\s

In the above definition there are many properties to be checked, but the 
following existence result is classical and a proof can be find in \cite{Hidalgo-Vasiliev}. 

\s
\noindent
\begin{prop}\label{Prop2}  
Let $\mu \in \overline{L^{\infty}_{1}(\Delta,G)}$ be such that $\Omega(\mu) \neq \emptyset$, $\Omega_{1}$ be a connected component
of $\Omega(\mu)$ and $x_{1}, x_{2}, x_{3} \in \Omega_{1}$ three different points. Then 
there is a noded quasiconformal map $w:\Omega(\mu) \to \widehat{\mathbb C}$, for $(\Delta,G)$, with dilatation $\mu$,  
fixing the points $x_{1}$, $x_{2}$ and $x_{3}$,
which is a homeomorphism when restricted to $\Omega_{1}$. 
\end{prop}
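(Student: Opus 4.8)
The plan is to reduce Proposition~\ref{Prop2} to the classical measurable Riemann mapping theorem (Theorem~\ref{A-B}) together with a compactness/normal-families argument. First I would fix a sequence $\mu_n \in L^\infty_1(\Delta,G)$ converging to $\mu$ almost everywhere on $\Omega(\mu)$; such a sequence exists because $\mu$ lies in the closure $\overline{L^\infty_1(\Delta,G)}$, and after passing to a subsequence one may assume the convergence is a.e.\ on all of $\widehat{\mathbb C}$ (for instance taking $\mu_n = (1-1/n)\mu$ truncated away from $\Lambda(\mu)$, which manifestly stays in the $G$-invariant unit ball). For each $n$ let $w_n:\widehat{\mathbb C}\to\widehat{\mathbb C}$ be the unique quasiconformal homeomorphism with complex dilatation $\mu_n$ normalized to fix the three chosen points $x_1,x_2,x_3\in\Omega_1$; existence and uniqueness are exactly Theorem~\ref{A-B} (after conjugating the normalization from $0,1,\infty$ to $x_1,x_2,x_3$ by a fixed M\"obius map). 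This produces candidates for conditions (iii) and (iv); what remains is to extract a limit $w$ satisfying (i), (ii), (iv), and orientation-preservation.

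The key step is a normal-families argument on $\Omega(\mu)$. On any compact subset $K\subset\Omega(\mu)$ one has $\|\mu|_K\|_\infty = k_K < 1$, hence $\|\mu_n|_K\|_\infty \le k_K$ for $n$ large, so the restrictions $w_n|_K$ are uniformly $K_K$-quasiconformal with a fixed bound depending only on $k_K$. The standard compactness theorem for $K$-quasiconformal maps (together with the three-point normalization pinning down the limit and preventing degeneration) gives a subsequence converging locally uniformly on $\Omega(\mu)$ to a map $w$; the limit of uniformly quasiconformal maps is either a quasiconformal embedding or constant on each component, and on the component $\Omega_1$ the fixed points $x_1,x_2,x_3$ force $w|_{\Omega_1}$ to be a nonconstant $K_{\Omega_1}$-quasiconformal embedding, hence a homeomorphism onto its image, giving (i). The Beltrami equation (ii) passes to the limit because $\mu_n\to\mu$ a.e.\ and $w_n\to w$ locally uniformly with $L^{2,1}_{loc}$ bounds; this is the standard convergence argument for solutions of Beltrami equations, and it also places $w\in L^{2,1}_{loc}(\Omega(\mu))$. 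Finally (iii) and (iv) hold by construction with this very sequence $\mu_n, w_n$.

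The main obstacle I expect is controlling the behavior of $w_n$ near $\Lambda(\mu)$ so that the limit $w$ is genuinely defined and well-behaved on all of $\Omega(\mu)$ — not merely on $\Omega_1$ — and is orientation-preserving rather than collapsing components. Two points need care here: first, a priori the $w_n$ are homeomorphisms of the whole sphere, but the equicontinuity estimates only hold on compacta of $\Omega(\mu)$, so one must argue that the local-uniform limit exists on every component of $\Omega(\mu)$ and does not depend on the subsequence in a way that destroys (iv); the three-point normalization handles the component $\Omega_1$, and for other components one uses that a local-uniform limit of embeddings is an embedding or a constant, both of which are acceptable for the definition (only (i) requires a genuine homeomorphism on one component). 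Second, one must verify the limit map $w$ is orientation-preserving: this follows since each $w_n$ is orientation-preserving and $w$ has a nonvanishing Jacobian a.e.\ on $\Omega_1$ by the Beltrami equation with $\|\mu\|_\infty<1$ there. All of these are handled by invoking the classical quasiconformal compactness and Beltrami-convergence theorems, as the proposition's statement indicates; the detailed verification is what is deferred to \cite{Hidalgo-Vasiliev}.
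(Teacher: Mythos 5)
The paper gives no proof of Proposition \ref{Prop2} at all --- it is stated as classical with the proof deferred to \cite{Hidalgo-Vasiliev} --- and your argument is precisely that standard proof: approximate by $\mu_n=(1-1/n)\mu\in L^{\infty}_{1}(\Delta,G)$, normalize $w_{n}$ at $x_1,x_2,x_3$, use that $\|\mu|_{K}\|_{\infty}<1$ on compacta $K\subset\Omega(\mu)$ together with quasiconformal compactness to extract a locally uniform limit, and pass the Beltrami equation to the limit. Your outline is correct; the one step worth making explicit is that normality on components other than $\Omega_{1}$ follows because the $w_{n}$ are global injections fixing $x_1,x_2,x_3$, hence omit these three values on any such component (constant limits there being admissible, as the paper's remark after the proposition notes), while on $\Omega_{1}$ the three fixed points rule out both the constant and degenerate limits and give injectivity.
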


\s
\noindent
\begin{rema}
Let us observe that the noded quasiconformal map $w$ for $(\Delta,G)$ in Proposition \ref{Prop2} might be constant on some other connected component of $\Omega(\mu)$. 
\end{rema}

\s

\subsection{Noded family of arcs}
A countable collection ${\mathcal F}_{\mu}=\{ \alpha_{1},\alpha_{2},...\}$ of pairwise disjoint simple arcs
(including end points)
is called a {\it noded family of arcs associated with $\mu \in \overline{L^{\infty}_{1}(G,\Delta)}$}, if the
following properties hold:
\begin{itemize}
\item[(1)] $\alpha_{n}^{*} \subset \Delta$, where $\alpha_{n}^{*}$ denotes
$\alpha_{n}$ minus both extremes; 
\item[(2)] the spherical diameter of
$\alpha_{n}$ goes to $0$ as $n$ goes to $\infty$; 
\item[(3)] $\Lambda(\mu)=\overline{\cup_{j=1}^{\infty}\alpha_{j}}$;
\item[(4)] $\Omega(\mu) \subset \widehat{\mathbb C}$ is a dense subset; 
\item[(5)] the group $G_{n}=\{g \in G; g(\alpha_{n})=\alpha_{n}\}$, is either
a cyclic loxodromic group or a ${\mathbb Z}_{2}-$extension of a cyclic 
loxodromic group.
\end{itemize}

\s
\noindent
\begin{rema}
There are exist examples of  $\mu \in \overline{L^{\infty}_{1}(G,\Delta)}$ for which there is no associated noded family of arcs. In \cite{Hidalgo-Vasiliev} there are constructed examples in the positive direction.
\end{rema}

\s

%%%%%%%%%%%%%%%%%%%%%%
\subsection{Noded Beltrami coefficients for $G$ supported in $\Delta$}
By the stereographic projection, we may see the Riemann sphere as the unit sphere in ${\mathbb R}^{3}$. This allows us to consider the spherical metric and work with the spherical diameter of a subset of $\widehat{\mathbb C}$. 

An element $\mu \in  \overline{L^{\infty}_{1}(\Delta,G)}$ will be called a 
{\it noded Beltrami coefficient for $G$ supported in $\Delta$} if:

\begin{enumerate}
\item[(I)] $\mu$ has a noded family of arcs ${\mathcal F}_{\mu}=\{ \alpha_{1},\alpha_{2},...\}$, and

\item[(II)] there is a continuous map $w:\widehat{\mathbb C} \to \widehat{\mathbb C}$, called a {\it noded quasiconformal deformation of $G$ with complex dilatation $\mu$}, satisfying the following properties:  
\begin{enumerate}
\item[(1)] $w$ is injective in $\widehat{\mathbb C}-{\mathcal F}_{\mu}$;  
\item[(2)] $w:\Omega(\mu) \to \widehat{\mathbb C}$ is a noded quasiconformal map for $(\Delta,G)$ with complex dilatation $\mu$; 
\item[(3)] the restriction of $w$ to each arc $\alpha_{i}$ is a constant 
$p_{i}$, where $p_{i} \neq p_{j}$ for $i \neq j$. 
\end{enumerate}
\end{enumerate}

We denote by $L^{\infty}_{{\rm noded}}(\Delta,G)$ the subset of $\overline{L^{\infty}_{1}(\Delta,G)}$ consisting of the noded Beltrami coefficients for $G$ supported at $\Delta$. Clearly, $L^{\infty}_{1}(\Delta,G) \subset L^{\infty}_{{\rm noded}}(\Delta,G)$.

\s
\noindent
\begin{rema}
 Let $\mu \in L^{\infty}_{{\rm noded}}(\Delta,G)$, with noded family of arcs ${\mathcal F}$, and let
$w:\widehat{\mathbb C} \to \widehat{\mathbb C}$ be a 
noded quasiconformal deformation of $G$ with the complex dilatation $\mu$. Then,
the following statements are true
\begin{itemize}
\item[(1)] if $\alpha \in {\mathcal F}_{\mu}$, then 
both end points are the fixed points
of a loxodromic element of $G$. Such a loxodromic element keeps 
the connected component of $\Omega(G)$ containing $\alpha$
invariant;
\item[(2)] if $\Lambda(\mu) \neq \emptyset$, then 
$\Lambda(G) \subset \Lambda(\mu)$. This is a consequence of Proposition E.4 in \cite[page 96]{M1};
\item[(3)] $w(\Omega(u)) \cap w(\Lambda(\mu))=\emptyset$. Indeed, if there
were points $p_{1} \in \Omega(\mu)$ and $p_{2} \in \Lambda(\mu)$, such that
$w(p_{1})=w(p_{2})=q$, then by continuity of $w$ we could find two disjoint open
sets $U \subset \Omega(\mu)$ and $V$, $p_{1} \in U$, and $p_{2} \in V$, such that
$w(U)=w(V)$. The density property of $\Omega(\mu)$  asserts that there
are points $q_{1} \in U$ and $q_{2} \in V \cap \Omega(\mu)$ for which
$w(q_{1})=w(q_{2})$, contradicting the injectivity of the map $w$ restricted to
$\Omega(\mu)$;
\item[(4)] As a consequence of the definition of noded Beltrami differential, we have $w(\widehat{\mathbb C})=\widehat{\mathbb C}$, that is, the map
$w$ is surjective.
 \end{itemize}
 \end{rema}

\s

The importance of the noded quasiconformal deformations of Kleinian groups is
reflected in the following result.

\s
\noindent
\begin{theo}[\cite{Hidalgo-Vasiliev}]\label{Teo1} 
Let $\mu \in L^{\infty}_{{\rm noded}}(\Delta,G)$ and 
$w:\widehat{\mathbb C} \to \widehat{\mathbb C}$ be a 
noded quasiconformal deformation for $G$ with the complex dilatation $\mu$. Then
there is a unique Kleinian group $\theta(G)$ 
and  a unique isomorphism of groups $\theta:G \to \theta(G)$ such
that $w \circ \gamma = \theta(\gamma) \circ w$.  Moreover, the region of discontinuity of 
$\theta(G)$ is $w(\Omega(\mu) \cap \Omega(G))$.
\end{theo}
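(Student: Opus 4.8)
The plan is to realise $\theta$ as the algebraic limit of the conjugations $\theta_n(\gamma):=w_n\circ\gamma\circ w_n^{-1}$, where $\mu_n\in L^{\infty}_{1}(\Delta,G)$ and the quasiconformal homeomorphisms $w_n$ (with complex dilatations $\mu_n$) are the approximating data built into the definition of the noded quasiconformal deformation $w$; thus $\mu_n\to\mu$ a.e.\ on $\Omega(\mu)$ and $w_n\to w$ locally uniformly on $\Omega(\mu)$. Since $\mu_n$ is $G$-compatible, $w_n\circ\gamma$ and $w_n$ have the same complex dilatation, so each $\theta_n(\gamma)$ is a M\"obius transformation and $\theta_n\colon G\to\mathrm{PSL}_2(\mathbb C)$ is a faithful representation with discrete image (topologically conjugate, via $w_n$, to $G$). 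Once $\theta=\lim\theta_n$ is shown to exist with M\"obius values, everything else is read off from the resulting equivariance $\theta(\gamma)\circ w=w\circ\gamma$.

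To see that $\theta_n(\gamma)$ converges, fix $\gamma\in G$ and three distinct points $z_1,z_2,z_3\in\Omega(\mu)$. Since $\Omega(\mu)$ is $G$-invariant and disjoint from $\mathcal F_\mu=\bigcup_j\alpha_j\subset\Lambda(\mu)$, and $w$ is injective off $\mathcal F_\mu$, the points $w(z_i)$ are pairwise distinct and so are the $w(\gamma z_i)$; as $\theta_n(\gamma)(w_n(z_i))=w_n(\gamma z_i)$ with $w_n(z_i)\to w(z_i)$ and $w_n(\gamma z_i)\to w(\gamma z_i)$, any subsequential limit of $\theta_n(\gamma)$ carries the triple $(w(z_i))$ to the non-degenerate triple $(w(\gamma z_i))$, hence cannot collapse to a constant map and is a M\"obius transformation; moreover it is pinned down uniquely by $\theta(\gamma)\circ w=w\circ\gamma$ on $\Omega(\mu)$, hence (by continuity and surjectivity of $w$) on all of $\widehat{\mathbb C}$. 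Uniqueness of the subsequential limit forces convergence of the whole sequence. Passing to the limit in $\theta_n(\gamma_1\gamma_2)=\theta_n(\gamma_1)\theta_n(\gamma_2)$ makes $\theta$ a homomorphism; it is injective because $\theta(\gamma)=\mathrm{id}$ gives $w\circ\gamma=w$, whence $\gamma z=z$ for all $z$ in the dense set $\Omega(\mu)$ where $w$ is injective, so $\gamma=\mathrm{id}$. The uniqueness clause of the statement follows from the same surjectivity remark: any family of M\"obius maps $\theta'$ with $\theta'(\gamma)\circ w=w\circ\gamma$ agrees with $\theta(\gamma)$ on $w(\widehat{\mathbb C})=\widehat{\mathbb C}$, so $\theta'=\theta$ and $\theta'(G)=\theta(G)$.

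The core of the proof is the discreteness of $\theta(G)$ and the identification of $\Omega(\theta(G))$. Set $\Omega:=\Omega(\mu)\cap\Omega(G)$: it is open, $G$-invariant, non-empty (being dense in $\Omega(G)$), contained in $\Omega(\mu)$, and, by Stoilow factorisation of the Beltrami equation together with the injectivity of $w$ off the arcs, $w$ restricts to a homeomorphism of $\Omega(\mu)$ — in particular of $\Omega$ — onto an open subset of $\widehat{\mathbb C}$. Given a compact $K\subset w(\Omega)$, its preimage $L\subset\Omega\subset\Omega(G)$ under this homeomorphism is compact, and since $\theta(\gamma)\circ w=w\circ\gamma$ with $w$ injective on $\Omega(\mu)\supseteq L\cup\gamma L$, we get $\theta(\gamma)(K)\cap K\neq\emptyset$ if and only if $\gamma L\cap L\neq\emptyset$; as $G$ acts properly discontinuously on $\Omega(G)$ there are only finitely many such $\gamma$. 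Hence $\theta(G)$ acts properly discontinuously on the non-empty open set $w(\Omega)$; in particular $\theta(G)$ is discrete — a Kleinian group — and $w(\Omega)\subseteq\Omega(\theta(G))$.

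For the reverse inclusion: if $\Lambda(\mu)=\emptyset$ then $w$ is an ordinary quasiconformal homeomorphism of $\widehat{\mathbb C}$ and the statement reduces to Lemma \ref{lema1}; otherwise $\Lambda(G)\subseteq\Lambda(\mu)$, so $\widehat{\mathbb C}\setminus\Omega=\Lambda(\mu)$, and it suffices to prove $w(\Lambda(\mu))\subseteq\Lambda(\theta(G))$. Indeed $w(\Omega)$ and $w(\Lambda(\mu))$ then partition $\widehat{\mathbb C}$ (jointly covering it by surjectivity of $w$, and disjointly because $w(\Omega(\mu))\cap w(\Lambda(\mu))=\emptyset$), and matching this against $\Omega(\theta(G))\sqcup\Lambda(\theta(G))$ forces $\Omega(\theta(G))=w(\Omega)=w(\Omega(\mu)\cap\Omega(G))$. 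To get $w(\Lambda(\mu))\subseteq\Lambda(\theta(G))$: each arc $\alpha_j$ has its two endpoints fixed by a loxodromic $\delta_j\in G$ and is collapsed by $w$ to a single point $p_j$; the two fixed points of the loxodromic $\theta_n(\delta_j)=w_n\delta_jw_n^{-1}$ are the $w_n$-images of the fixed points of $\delta_j$, hence both converge to $p_j=w(\alpha_j)$, which forces the limit $\theta(\delta_j)$ to have a single fixed point, i.e.\ to be parabolic (it is not the identity, by injectivity of $\theta$); thus $p_j\in\Lambda(\langle\theta(\delta_j)\rangle)\subseteq\Lambda(\theta(G))$, and since $\Lambda(\theta(G))$ is closed and $w$ continuous, $w(\Lambda(\mu))=w\bigl(\overline{\bigcup_j\alpha_j}\bigr)=\overline{\{p_j\}_j}\subseteq\Lambda(\theta(G))$. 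The step I expect to be the main obstacle is exactly this discreteness and region-of-discontinuity analysis: because $w$ is only continuous and injective off the collapsing arcs, the Kleinian structure of $G$ cannot be transplanted by a direct conjugation as in Lemma \ref{lema1}, and it is the proper-discontinuity estimate on $w(\Omega)$, the fact that the arcs degenerate along loxodromic axes (producing parabolics in the limit), and the disjointness $w(\Omega(\mu))\cap w(\Lambda(\mu))=\emptyset$ that carry the argument.
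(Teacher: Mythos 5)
Your overall strategy --- realizing $\theta$ as the algebraic limit of the conjugated representations $\theta_n=w_n(\cdot)w_n^{-1}$ supplied by the definition, extending the equivariance from $\Omega(\mu)$ to $\widehat{\mathbb C}$ by density, and proving proper discontinuity of $\theta(G)$ on $w(\Omega(\mu)\cap\Omega(G))$ via the injectivity of $w$ off the arcs --- is the natural one, and those parts are essentially correct (note that the present paper does not reproduce a proof; the statement is imported from \cite{Hidalgo-Vasiliev}). Two cosmetic points: invariance of domain already makes $w|_{\Omega(\mu)}$ a homeomorphism onto an open set, so the appeal to Stoilow factorization is unnecessary; and the extension of $w\circ\gamma=\theta(\gamma)\circ w$ to all of $\widehat{\mathbb C}$ rests on density of $\Omega(\mu)$ plus continuity, not on surjectivity of $w$ (surjectivity is what you correctly use later for the uniqueness of $\theta$).

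There is, however, one genuine gap, and it sits at the heart of the reverse inclusion $\Omega(\theta(G))\subseteq w(\Omega(\mu)\cap\Omega(G))$: you assert that the fixed points of $\theta_n(\delta_j)=w_n\delta_jw_n^{-1}$, i.e. the points $w_n(q^{\pm})$ where $q^{\pm}$ are the endpoints of $\alpha_j$, converge to $p_j=w(\alpha_j)$. But $q^{\pm}\in\Lambda(\mu)$, and the definition of a noded quasiconformal deformation only guarantees locally uniform convergence $w_n\to w$ on $\Omega(\mu)$; no convergence of $w_n$ at points of $\Lambda(\mu)$, even pointwise, is available, so the step ``hence both converge to $p_j$'' is unjustified, and with it your conclusion that $\theta(\delta_j)$ is parabolic and $p_j\in\Lambda(\theta(G))$. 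The gap can be closed with what you have already proved. From the global relation $w\circ\delta_j=\theta(\delta_j)\circ w$ and continuity of $w$, for $z\in\Omega:=\Omega(\mu)\cap\Omega(G)$ one gets $\theta(\delta_j)^{k}(w(z))=w(\delta_j^{k}z)\to w(q^{+})=p_j$ as $k\to\infty$, since $\delta_j^{k}z$ tends to an endpoint of $\alpha_j$ and $w$ collapses $\alpha_j$ (endpoints included) to $p_j$. The orbit of $w(z)$ lies in the $\theta(G)$-invariant set $w(\Omega)$, which is disjoint from $w(\Lambda(\mu))\ni p_j$; hence $w(z)$ is not fixed by $\theta(\delta_j)$ and, because $\theta(G)$ is discrete, $\theta(\delta_j)$ cannot be elliptic of finite order (a finite orbit inside $w(\Omega)$ could not accumulate at $p_j$). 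Therefore $\theta(\delta_j)$ is parabolic or loxodromic and $p_j$ is one of its fixed points, so $p_j\in\Lambda(\theta(G))$; your closure argument $w(\Lambda(\mu))=\overline{\{p_j\}_j}\subseteq\Lambda(\theta(G))$ and the partition argument then complete the identification of $\Omega(\theta(G))$.
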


\s

The proof of the above theorem provided in \cite{Hidalgo-Vasiliev} also permit to have the following more general situation.

\s
\noindent
\begin{coro}[\cite{Hidalgo-Vasiliev}]\label{Cor1}
Let $\mu \in \overline{L^{\infty}_{1}(\Delta,G)}$ 
and let $w:\Omega(\mu) \to w(\Omega(\mu))$ be a
homeomorphism with complex dilatation $\mu$.
Suppose that there is a sequence $w_{n}$ of quasiconformal homeomorphisms
with corresponding complex dilatations $\mu_{n} \in L^{\infty}_{1}(\Delta,G)$,
converging locally uniformly to $w$ in $\Omega(\mu)$. Then
there exist a group $\theta(G)$ of M\"obius transformations and 
an isomorphism of groups $\theta:G \to \theta(G)$, 
such that $w \circ \gamma = \theta(\gamma) \circ w$.
\end{coro}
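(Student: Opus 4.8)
The plan is to deduce Corollary~\ref{Cor1} directly from Theorem~\ref{Teo1}, or rather from its proof, by reducing the hypothesis of the Corollary to the situation treated there. The point is that Theorem~\ref{Teo1} establishes the existence of the homomorphism $\theta$ and the equivariance $w\circ\gamma=\theta(\gamma)\circ w$ under the hypothesis that $\mu$ is a \emph{noded} Beltrami coefficient — i.e.\ $\mu$ admits a noded family of arcs and a \emph{globally defined} continuous noded quasiconformal deformation $w:\widehat{\mathbb C}\to\widehat{\mathbb C}$. In the Corollary we only assume that $\mu\in\overline{L^\infty_1(\Delta,G)}$, that $w$ is a homeomorphism of $\Omega(\mu)$ onto its image with dilatation $\mu$, and that $w$ is a locally uniform limit on $\Omega(\mu)$ of quasiconformal homeomorphisms $w_n$ with dilatations $\mu_n\in L^\infty_1(\Delta,G)$. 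So I do \emph{not} get a global map, a noded family of arcs, or surjectivity for free; what I must show is that the bookkeeping used to produce $\theta$ in the proof of Theorem~\ref{Teo1} only ever uses the equivariant-limit data $(w_n,\mu_n,w,\Omega(\mu))$ and not the extra noded structure.

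Concretely, the steps I would carry out are as follows. First, fix $\gamma\in G$. For each $n$, Lemma~\ref{lema1} gives a M\"obius transformation $\theta_n(\gamma):=w_n\gamma w_n^{-1}\in\mathrm{PSL}_2(\mathbb C)$, and these satisfy $w_n\circ\gamma=\theta_n(\gamma)\circ w_n$ everywhere on $\widehat{\mathbb C}$; moreover $\gamma\mapsto\theta_n(\gamma)$ is an injective homomorphism. Second, I would show the sequence $\theta_n(\gamma)$ is precompact in $\mathrm{PSL}_2(\mathbb C)$ and extract a convergent subsequence $\theta_{n_k}(\gamma)\to A_\gamma$: this is the standard normal-families argument, using that each $w_{n}$ is normalized (fixing, say, three marked points $x_1,x_2,x_3\in\Omega(\mu)$ as in Proposition~\ref{Prop2}, which we may assume after composing with M\"obius maps) and that $w_{n_k}$ converges locally uniformly on $\Omega(\mu)$ to the homeomorphism $w$, so $\theta_{n_k}(\gamma)$ is pinned down by its action on the images of three points of $\Omega(\mu)$ — a M\"obius transformation cannot degenerate while keeping three distinct image points bounded away from each other. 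Third, passing to the limit in $w_{n_k}\circ\gamma=\theta_{n_k}(\gamma)\circ w_{n_k}$, valid on the $G$-invariant set $\Omega(\mu)\cap\gamma^{-1}\Omega(\mu)$ which is dense, and using continuity of $w$ and of the Möbius action, yields $w\circ\gamma=A_\gamma\circ w$ on $\Omega(\mu)$. Fourth, the identity $w\circ\gamma=A_\gamma\circ w$ together with $w$ being a homeomorphism onto its image forces $A_\gamma$ to be \emph{unique} (it is determined by its values on the image of any three points), hence independent of the chosen subsequence; in particular the whole sequence $\theta_n(\gamma)$ converges to $A_\gamma$. Fifth, uniqueness gives that $\gamma\mapsto A_\gamma$ is a homomorphism: from $w\circ(\gamma_1\gamma_2)=A_{\gamma_1}A_{\gamma_2}\circ w$ and the uniqueness of the representing M\"obius map we get $A_{\gamma_1\gamma_2}=A_{\gamma_1}A_{\gamma_2}$; injectivity follows because $A_\gamma=\mathrm{id}$ forces $w$ to be $\gamma$-invariant on $\Omega(\mu)$, which contradicts injectivity of $w$ on $\Omega(\mu)$ unless $\gamma=\mathrm{id}$ (here one uses that $\Omega(\mu)$ meets more than one $\langle\gamma\rangle$-orbit, which holds since $\Omega(\mu)$ is open and nonempty and $\langle\gamma\rangle$ is infinite cyclic or finite acting freely off its fixed points). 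Set $\theta:=(\gamma\mapsto A_\gamma)$ and $\theta(G):=\theta(G)$; discreteness of $\theta(G)$ follows since it is the limit of the discrete groups $\theta_n(G)=w_nGw_n^{-1}$, e.g.\ by the fact that a limit of a sequence of groups isomorphic to $G$ via maps converging on a common invariant set is non-elementary and discrete — or simply by citing that $\theta(G)$ acts properly discontinuously on the nonempty open set $w(\Omega(\mu)\cap\Omega(G))$, as in Theorem~\ref{Teo1}.

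The main obstacle I expect is Step two together with the discreteness claim: I must rule out degeneration of $\theta_{n}(\gamma)$, and here one genuinely uses that $w$ is a homeomorphism of \emph{some} component of $\Omega(\mu)$ (guaranteed by hypothesis) and not merely a constant or a degenerate limit — this is exactly where the Corollary's weaker hypothesis is still strong enough, and it is the subtle point that the proof of Theorem~\ref{Teo1} already handles. I would therefore emphasize that the argument is essentially a transcription: the proof in \cite{Hidalgo-Vasiliev} of Theorem~\ref{Teo1} produces $\theta$ from precisely the equivariant-limit data $(w_n,\mu_n)\to(w,\cdot)$ on $\Omega(\mu)$, and the only role played by the noded family of arcs and the global continuous extension is to control what happens \emph{on the limit set} $\Lambda(\mu)$ and to get surjectivity of $w$ onto $\widehat{\mathbb C}$ — neither of which is needed for the conclusion of Corollary~\ref{Cor1}, which only asserts the existence of $\theta:G\to\theta(G)$ with $w\circ\gamma=\theta(\gamma)\circ w$ on the domain $\Omega(\mu)$ of $w$. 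Hence the Corollary follows by the same proof, simply discarding the steps about $\Lambda(\mu)$ and global surjectivity.
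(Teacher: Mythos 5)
Your proposal is correct and follows essentially the same route as the paper, which proves this corollary simply by observing that the proof of Theorem \ref{Teo1} in \cite{Hidalgo-Vasiliev} (the limiting argument on the conjugates $w_{n}\gamma w_{n}^{-1}$, pinned down by three points of $\Omega(\mu)$ and their $w$-images) uses only the equivariant-limit data on $\Omega(\mu)$ and none of the noded structure. Your closing remarks on normalizing the $w_{n}$ and on discreteness of $\theta(G)$ are unnecessary for the statement as given, but they do not affect the argument.
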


\s

The following result relates two noded quasiconformal deformations with the same dilation similar as the situation for the quasiconformal ones.

\s
\noindent
\begin{prop}\label{Lema2}
Let $\mu \in L^{\infty}_{noded}(\Delta,G)$ and let $w_{1}$ and $w_{2}$ be noded
quasiconformal deformations of $G$ with the complex dilatation $\mu$. Then there exists 
an orientation preserving homeomorphism  
$T:\widehat{\mathbb C} \to \widehat{\mathbb
C}$ whose restriction $T:w_{1}(\Omega(\mu)) \to w_{2}(\Omega(\mu))$ is a conformal mapping, such that
$T \circ w_{1} = w_{2}$.
\end{prop}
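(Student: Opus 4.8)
The plan is to reduce the statement to the familiar "two quasiconformal homeomorphisms with the same dilatation differ by a Möbius transformation'' principle, applied on a single distinguished component, and then propagate the resulting conformal identification across the whole sphere using the structure of noded quasiconformal deformations. First I would recall that, by hypothesis, $w_1$ and $w_2$ are both noded quasiconformal deformations of $G$ with dilatation $\mu$; in particular each is injective off the noded family of arcs ${\mathcal F}_\mu$, is a noded quasiconformal map for $(\Delta,G)$ on $\Omega(\mu)$, and collapses each arc $\alpha_i$ to a distinct point. Fix a component $\Omega_1$ of $\Omega(\mu)$ on which $w_1$ is a homeomorphism onto its image (such a component exists by condition (i) in the definition of a noded quasiconformal map); by the same token $w_2$ is a homeomorphism of some component onto its image, but using the approximating sequences $w_n$ (which are global homeomorphisms and converge locally uniformly to both $w_1$ and $w_2$ on $\Omega(\mu)$, after passing to a common subsequence as in Corollary~\ref{Cor1}) one sees the component of injectivity can be taken to be the same $\Omega_1$ for both maps. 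On $\Omega_1$ the composition $w_2 \circ w_1^{-1}$ is then a well-defined homeomorphism of $w_1(\Omega_1)$ onto $w_2(\Omega_1)$ whose complex dilatation is $0$ almost everywhere, hence it is a conformal map there.

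Next I would upgrade this conformal map on one component to a conformal map on all of $w_1(\Omega(\mu))$. The key tool is Theorem~\ref{Teo1}: each $w_k$ conjugates $G$ to a Kleinian group $\theta_k(G)$ via an isomorphism $\theta_k\colon G \to \theta_k(G)$ with $w_k \circ \gamma = \theta_k(\gamma)\circ w_k$, and the region of discontinuity of $\theta_k(G)$ is $w_k(\Omega(\mu)\cap\Omega(G))$. The conformal map $S := w_2 \circ w_1^{-1}$ on $w_1(\Omega_1)$ satisfies $S \circ \theta_1(\gamma) = \theta_2(\gamma)\circ S$ for all $\gamma$ in the stabilizer of $\Omega_1$, by the conjugation relations; since $G$ permutes the components of $\Omega(\mu)$ and acts transitively (within each $G$-orbit of components), I can transport $S$ equivariantly to every $G$-translate of $\Omega_1$ by the formula $S = \theta_2(\gamma) \circ S \circ \theta_1(\gamma)^{-1}$, and these pieces agree on overlaps by the equivariance just noted. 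For the components of $\Omega(\mu)$ not in the $G$-orbit of $\Omega_1$, I would use the local-uniform convergence of the $w_n$ together with $S \circ w_1 = w_2$ on $\Omega_1$ to identify, on each such component, $S$ as the (necessarily conformal, possibly constant) limit map $\lim w_n \circ (w_n)^{-1}$ appropriately interpreted; the cleanest route is to note $w_2 = \lim w_n = \lim (S_n \circ w_n)$ where $S_n$ are Möbius maps, extract a limit $S_\infty$ of the $S_n$ (normal families on $\widehat{\mathbb C}$, using the three fixed points to pin down non-degeneracy), and check $S_\infty = S$ on $\Omega_1$, so $S$ extends as the Möbius map $S_\infty$ if the $S_n$ converge to a Möbius map, or more generally as a continuous map on all of $\widehat{\mathbb C}$.

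Finally I would define $T := S_\infty$ (equivalently, the continuous extension of $w_2 \circ w_1^{-1}$) on all of $\widehat{\mathbb C}$: on $w_1(\Omega(\mu))$ it is conformal by the computation of dilatation, it satisfies $T \circ w_1 = w_2$ on $\widehat{\mathbb C}$ by construction and continuity (both sides being continuous and agreeing on the dense set $\Omega(\mu)$, using property (4) of the preceding remark that $w_k$ is surjective and property (3) that $\Omega(\mu)$ is dense), and it is an orientation-preserving homeomorphism of $\widehat{\mathbb C}$ because it intertwines the two homeomorphisms $w_1$ and $w_2$ and is conformal (hence a local homeomorphism) on the dense open set $w_1(\Omega(\mu))$, while its behaviour on the image of the noded arcs is forced by the collapsing data: $w_1$ sends $\alpha_i$ to $p_i^{(1)}$ and $w_2$ sends $\alpha_i$ to $p_i^{(2)}$, so $T(p_i^{(1)}) = p_i^{(2)}$, consistently. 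The main obstacle I anticipate is the second step: ensuring that the conformal map defined on one component genuinely extends to a globally defined homeomorphism of the whole sphere, rather than merely a conformal map on $w_1(\Omega(\mu))$ with uncontrolled behaviour on the limit set $w_1(\Lambda(\mu))$. This is where the approximating sequences $w_n$ and the normalization by three fixed points are essential — they provide the compactness needed to produce the global extension $S_\infty$ and to rule out degeneration — and care must be taken that the limit of the correcting Möbius maps $S_n$ is itself Möbius (equivalently, that no collapsing occurs), which follows from the fact that $w_1$ and $w_2$ are genuine homeomorphisms on $\Omega_1$ with a common triple of fixed points in $\Omega_1$.
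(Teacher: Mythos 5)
Your first and last paragraphs do arrive at the paper's construction --- set $T=w_{2}\circ w_{1}^{-1}$ on $w_{1}(\Omega(\mu))$ (and on the $w_{1}$-images of the limit points not lying on the arcs) and send each collapsed point $w_{1}(\alpha_{i})$ to $w_{2}(\alpha_{i})$ --- but the middle of your argument both solves a non-problem and contains a genuine gap. The non-problem: you only invoke condition (i) of the definition of a noded quasiconformal map, giving injectivity on a single component $\Omega_{1}$, and then try to propagate the conformal map to other components by $G$-equivariance plus a limiting argument. This is unnecessary: $w_{1}$ and $w_{2}$ are noded quasiconformal \emph{deformations}, so by condition II(1) of the definition they are injective on all of $\widehat{\mathbb C}-{\mathcal F}_{\mu}$, which contains $\Omega(\mu)$ (recall $\Lambda(\mu)=\overline{\cup_{j}\alpha_{j}}$). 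Hence $w_{2}\circ w_{1}^{-1}$ is well defined on all of $w_{1}(\widehat{\mathbb C}-{\mathcal F}_{\mu})$, and on $w_{1}(\Omega(\mu))$, where $\mu$ has local essential norm less than one, both maps are locally quasiconformal solutions of the same Beltrami equation, so the composition is conformal there; the group action plays no role in this proposition.

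The genuine gap is your route to the global extension. You posit $w_{2}=\lim (S_{n}\circ w_{n})$ with $S_{n}$ M\"obius and extract a M\"obius limit $S_{\infty}$. Nothing in the hypotheses provides this: the two deformations come with a priori unrelated approximating sequences whose dilatations $\mu_{n}^{(1)},\mu_{n}^{(2)}$ only converge to $\mu$ almost everywhere, so $w_{n}^{(2)}\circ (w_{n}^{(1)})^{-1}$ is not M\"obius, and the statement does not assume $w_{1},w_{2}$ share three fixed points (that normalization belongs to Proposition \ref{Prop2}, not here). Moreover the target of your argument is too strong: $T$ must carry the countably many distinct points $w_{1}(\alpha_{i})$ to $w_{2}(\alpha_{i})$, so in general it cannot be a M\"obius transformation --- the proposition asserts conformality only on $w_{1}(\Omega(\mu))$ --- and your fallback ``or more generally as a continuous map'' is exactly the assertion that needs proof. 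Likewise, ``$T$ is a homeomorphism because it intertwines the two homeomorphisms $w_{1}$ and $w_{2}$'' is unavailable, since $w_{1},w_{2}$ are not homeomorphisms of $\widehat{\mathbb C}$. The intended argument is elementary: both $w_{i}$ are continuous surjections of the compact sphere which collapse exactly the arcs of ${\mathcal F}_{\mu}$, each to a distinct point, and are injective elsewhere; thus they are closed quotient maps inducing the same decomposition of $\widehat{\mathbb C}$, so $T$ defined by $T\circ w_{1}=w_{2}$ is a well-defined bijection, continuous together with its inverse because $w_{1}$ and $w_{2}$ are quotient maps, orientation preserving, and conformal on $w_{1}(\Omega(\mu))$ as above.
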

\begin{proof} The construction of $T$ is given as follows.
\begin{itemize}
\item[(3.1)] If $x \in w_{1}(\Omega(\mu))$, then set
$T(x)=w_{2}(w_{1}^{-1}(x))$.
\item[(3.2)] If $x \in w_{1}(\alpha_{n})$, then
set $T(x)=w_{2}(\alpha_{n})$.
\item[(3.3)] If $x \in
\Lambda(\mu)-\cup_{n}\alpha_{n}([0,1])$, then set $T(x)=w_{2}(w_{1}^{-1}(x))$.
\end{itemize}
\end{proof}

\s
\noindent
\begin{rema} 
Let $\mu \in L^{\infty}_{{\rm noded}}(\Delta,G)$, with associated noded family of arcs ${\mathcal F}$, 
and let
$w:\widehat{\mathbb C} \to \widehat{\mathbb C}$ be  a
noded quasiconformal deformation of $G$ with complex dilatation $\mu$. If
$\theta(G)$ and $\theta:G \to \theta(G)$ are as in Theorem \ref{Teo1}, then the following are easy to see.
\begin{itemize} 
\item[(1)] If $p$ belongs to one of the arcs in ${\mathcal F}$,
then  $w(p)$ is necessarily a doubly-cusped parabolic fixed point of $\theta(G)$.
\item[(2)] If $p \in \Lambda(G)$ is a loxodromic fixed point in $G$, 
which does not belong to any arc of ${\mathcal F}$, then $w(p)$ is again a 
loxodromic fixed point of $\theta(G)$.
\item[(3)] If $p$ is a rank two parabolic fixed point of $G$, then $w(p)$ is 
again a rank two parabolic fixed point in $\theta(G)$.
\item[(4)] If $p$ is a doubly-cusped parabolic fixed point of $G$, then $w(p)$ 
is again doubly-cusped in $\theta(G)$.
\end{itemize}
\end{rema}

\s

The previous remark permits to see the following fact.

\s
\noindent
\begin{theo}\label{Teo2} 
Under the hypothesis of Theorem \ref{Teo1}, if $G$ is
geometrically finite, then $\theta(G)$ is also geometrically finite.
\end{theo}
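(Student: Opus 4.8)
The plan is to verify geometric finiteness of $\theta(G)$ through its limit set: I will use the standard reformulation that a Kleinian group is geometrically finite exactly when every point of its limit set is a conical (radial) limit point or a bounded parabolic fixed point, with finitely many conjugacy classes of the latter. Since $\theta:G\to\theta(G)$ is an isomorphism, $\theta(G)$ is finitely generated. If $\mu\in L^{\infty}_{1}(\Delta,G)$ (the strict unit ball, so $\Lambda(\mu)=\emptyset$ and there are no arcs), then $w$ is an honest quasiconformal homeomorphism of $\widehat{\mathbb C}$ conjugating $G$ to $\theta(G)$, and the statement reduces to the classical fact that quasiconformal deformations of geometrically finite groups are geometrically finite; so from now on I assume $\mu$ carries a nonempty noded family of arcs $\mathcal F_{\mu}=\{\alpha_{1},\alpha_{2},\dots\}$ and $\Lambda(\mu)\neq\emptyset$.

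First I would pin down $\Lambda(\theta(G))$. Because $\Lambda(G)\subseteq\Lambda(\mu)$ one has $\Omega(\mu)\subseteq\Omega(G)$, so Theorem \ref{Teo1} gives $\Omega(\theta(G))=w(\Omega(\mu))$; combining this with the surjectivity of $w$ and with $w(\Omega(\mu))\cap w(\Lambda(\mu))=\emptyset$ (both recorded in the remarks above) yields $\Lambda(\theta(G))=w(\Lambda(\mu))$. Next, the arcs of $\mathcal F_{\mu}$ fall into finitely many $G$-orbits --- they project to a necessarily finite collection of pairwise disjoint essential simple loops (or arcs joining order-two cone points) on the analytically finite orbifold $\Delta/G$ --- and the $G$-orbit of any one arc accumulates only on $\Lambda(G)$; since moreover the two endpoints of every arc lie in $\Lambda(G)$, this gives $\Lambda(\mu)=\big(\bigcup_{j}\alpha_{j}^{*}\big)\sqcup\Lambda(G)$ (with $\alpha_{j}^{*}$ the open arc), and hence
$$\Lambda(\theta(G))=w(\Lambda(G))\cup\{\,w(\alpha_{j})\,:\,j\ge 1\,\}.$$

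It then remains to classify these points. By the remark preceding the statement, each collapsed arc $w(\alpha_{j})$ is a doubly-cusped parabolic fixed point of $\theta(G)$; a doubly-cusped parabolic point is automatically a bounded parabolic fixed point, since its two tangent invariant round discs lie in $\Omega(\theta(G))$ and therefore confine $\Lambda(\theta(G))\setminus\{w(\alpha_{j})\}$ to a strip-like region on which the cyclic parabolic stabilizer acts cocompactly. As $G$ is geometrically finite, its rank-one parabolic fixed points are doubly cusped and its rank-two ones are trivially bounded parabolic, so by the same remark the $w$-image of any parabolic fixed point of $G$ is again a parabolic fixed point of $\theta(G)$ of the same type, hence again bounded parabolic; and since $G$ has only finitely many conjugacy classes of parabolic points, together with the finitely many orbits of arcs this produces only finitely many conjugacy classes of bounded parabolic points of $\theta(G)$. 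For the remaining points $\xi=w(p)$ with $p\in\Lambda(G)$ neither parabolic nor an endpoint of an arc (the latter being loxodromic fixed points, whose images are again loxodromic fixed points, hence conical), geometric finiteness of $G$ forces $p$ to be conical, and one transports conicality across $w\gamma=\theta(\gamma)w$: if $g_{n}\in G$ witnesses $p$ conical, with $g_{n}(p)\to a$ and $g_{n}(q)\to b$ uniformly on compacta off $p$ and $a\neq b$, then $\theta(g_{n})(w(p))=w(g_{n}(p))\to w(a)$ while $\theta(g_{n})(y)\to w(b)$ uniformly on compacta off $w(p)$ (using continuity, surjectivity, and injectivity of $w$ near $p$), so $w(p)$ is conical for $\theta(G)$ as soon as $w(a)\neq w(b)$. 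Assembling these cases and invoking the characterization recalled at the start gives that $\theta(G)$ is geometrically finite.

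The delicate step is this last transport: $w$ is not a homeomorphism of $\widehat{\mathbb C}$ (it collapses the arcs) and is not quasiconformal near $\Lambda(G)$, so conicality must be handled purely with sphere dynamics, and one must exclude $w(a)=w(b)$; this can only fail when $\{a,b\}$ is the fixed-point pair of one of the loxodromics stabilizing an arc, which is a $G$-invariant condition on the unordered pair, so if it occurs one may replace $g_{n}$ by $hg_{n}$ (shifting the limits to $(h(a),h(b))$) unless it persists for all $h\in G$ --- and a short separate argument should show that this persistence is incompatible with $p$ being a conical limit point. Alternatively, one can sidestep the whole point by approximation: each $w_{n}Gw_{n}^{-1}$ is a quasiconformal deformation of the geometrically finite group $G$, hence geometrically finite, the sequence converges to $\theta(G)$ strongly because the degeneration is a pinching rather than a genuine degeneration, and geometric finiteness is known to be stable under such pinching (strong) limits by the classical work on boundary groups of Teichm\"uller space cited earlier.
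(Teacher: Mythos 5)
Your overall strategy is the one the paper intends (the paper itself only sketches it: the remark preceding the statement ``permits to see'' the theorem): identify $\Lambda(\theta(G))=w(\Lambda(\mu))$, observe that each collapsed arc produces a doubly-cusped, hence bounded, parabolic fixed point, that parabolic fixed points of $G$ keep their type, and then check the Beardon--Maskit criterion (every limit point conical or bounded parabolic). Two corrections before the main point. First, a bookkeeping error: the arcs $\alpha_{j}$ include their endpoints and $w$ is constant on each closed arc, so the image of an arc endpoint is the doubly-cusped parabolic point $w(\alpha_{j})$, not a loxodromic fixed point; item (2) of the remark applies only to loxodromic fixed points lying on no arc. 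This is harmless for your classification of $\Lambda(\theta(G))$, but your parenthetical is wrong. Second, your fallback route overreaches: geometric finiteness is \emph{not} stable under strong limits in general (strong limits of geometrically finite groups can be degenerate); what is true are the specific parabolic-pinching theorems of Maskit \cite{M4} and Ohshika \cite{Ohshika}, which is exactly what the paper invokes in the remark after Proposition \ref{propo4}, and proving that $w_{n}Gw_{n}^{-1}$ converges strongly to $\theta(G)$ is itself a nontrivial assertion (essentially the content of \cite{Hidalgo-Vasiliev}); so that alternative amounts to citing the known theorem rather than giving a proof.

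The substantive gap is precisely at the step you flag as delicate, and your proposed resolution does not work as described. You correctly reduce the transport of conicality to excluding $w(a)=w(b)$, i.e.\ to the case where the limit pair $\{a,b\}$ of the witnessing sequence is the fixed-point pair of an arc-stabilizing loxodromic $\delta$. But, as you yourself observe, that condition is $G$-invariant, so replacing $g_{n}$ by $hg_{n}$ \emph{never} moves you out of it; the ``persistence for all $h$'' you hope to contradict is automatic whenever the bad case occurs. Hence your ``short separate argument'' would have to show the bad case never occurs at all, and there is no evident reason a conicality-witnessing sequence for a point $p$ off the arcs cannot have exactly such a limit pair. What is actually needed is to manufacture a \emph{different} witnessing sequence, for instance by interleaving powers of $\delta$ with the $g_{n}$ (choosing exponents $k_{n}$ according to the rate at which $g_{n}(p)$ approaches the relevant fixed point, and using that $p$ is not an arc endpoint so $g_{n}(p)$ never equals that fixed point), so that the new limit pair is $(c,b)$ with $c\in\Lambda(G)\setminus\{a,b\}$; since $\Lambda(G)$ meets the closed arc only in its two endpoints, such a $c$ is not identified with $b$ under $w$, and conicality then transports by the compactness argument you already gave. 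Without this (or an equivalent argument in ${\mathbb H}^{3}$, or a precise appeal to \cite{M4,Ohshika}), the conical case --- which is the only nontrivial case --- remains unproved.
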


\s

The above, applied to Fuchsian groups of the first kind, permits to obtain the following with respect to noded Fuchsian groups.

\s
\noindent
\begin{coro}\label{coro4}
Let $G$ be a Fuchsian group of the first kind. Let $\mu \in L^{\infty}_{noded}({\mathbb D},G)$, let $w:\widehat{\mathbb C} \to \widehat{\mathbb C}$ be a  noded quasiconformal deformation for $G$ with the complex dilation $\mu$ and let $\theta:G \to \theta(G)$ such that $w \circ  \gamma=\theta(\gamma) \circ w$, for $\gamma \in G$. Then $\theta(G)$ is a noded Fuchsian group.
\end{coro}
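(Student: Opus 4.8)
The plan is to simply unwind the definition of a \emph{noded Fuchsian group}: such a group is a Kleinian group that is (a) geometrically finite, and (b) isomorphic to a Fuchsian group of the first kind. I would verify these two properties for $\theta(G)$ by assembling the results already established in Section \ref{Sec:NodedBeltrami}. Note first that, since $G$ is Fuchsian of the first kind, its region of discontinuity is the union of the two discs ${\mathbb D}$ and $\widehat{\mathbb C}-\overline{\mathbb D}$, and ${\mathbb D}$ is a $G$-invariant connected component of $\Omega(G)$; hence taking $\Delta={\mathbb D}$ is legitimate and the whole apparatus of noded Beltrami coefficients applies.

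The second point, (b), is immediate: applying Theorem \ref{Teo1} with $\Delta={\mathbb D}$ to the given $\mu\in L^{\infty}_{\rm noded}({\mathbb D},G)$ and noded quasiconformal deformation $w$, we obtain that $\theta(G)$ is a Kleinian group and that $\theta:G\to\theta(G)$ is a group isomorphism. Thus $\theta(G)$ is isomorphic to $G$, which is by hypothesis a Fuchsian group of the first kind, so (b) holds.

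For (a), the key ingredient is Theorem \ref{Teo2}, which says that, under the hypotheses of Theorem \ref{Teo1}, geometric finiteness is preserved: if $G$ is geometrically finite, so is $\theta(G)$. It therefore suffices to recall the classical fact that a finitely generated Fuchsian group $G$ is geometrically finite --- for instance because it admits a finite-sided Dirichlet fundamental polygon, or because by Selberg's lemma it contains a finite-index torsion-free normal subgroup (a surface group or Schottky group), which is geometrically finite, and geometric finiteness passes to finite extensions. Invoking Theorem \ref{Teo2} then gives that $\theta(G)$ is geometrically finite, which is (a). Combining (a) and (b), $\theta(G)$ is by definition a noded Fuchsian group, proving Corollary \ref{coro4}.

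Honestly there is no serious obstacle here; the corollary is a direct corollary of Theorems \ref{Teo1} and \ref{Teo2} specialized to the Fuchsian-of-the-first-kind case. The only things meriting a word of care are: checking that the hypotheses of those two theorems genuinely apply with $\Delta={\mathbb D}$ (handled by the remark about $\Omega(G)$ above), and resisting the temptation to prove more than is asked --- the definition of a noded Fuchsian group does \emph{not} require the presence of nodes or parabolics, so one need not, and should not, analyze whether $\mu$ lies genuinely on the boundary of $L^{\infty}_{1}({\mathbb D},G)$; the statement is valid even when $\mu\in L^{\infty}_{1}({\mathbb D},G)$, in which case $\theta(G)$ is an ordinary Fuchsian group of the first kind, which is trivially a noded Fuchsian group.
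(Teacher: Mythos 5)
Your argument is correct and is exactly the route the paper intends: the corollary is stated as a direct consequence of Theorem \ref{Teo1} (which gives the isomorphism $\theta:G\to\theta(G)$ onto a Kleinian group, hence a group isomorphic to a Fuchsian group of the first kind) and Theorem \ref{Teo2} (which transfers the geometric finiteness of the finitely generated Fuchsian group $G$ to $\theta(G)$), and the definition of a noded Fuchsian group then applies verbatim. Your added checks (that $\Delta={\mathbb D}$ is a legitimate invariant component, and that no nodes need actually occur) are consistent with the paper's setup and do not change the argument.
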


\s

%%%%%%%%%%%%%%
\subsection{Topological Realizations}\label{Sec:toporeal}
A simple loop $\alpha \subset S=\Delta/G$ (or a simple arc connecting two branch values of order $2$) is called {\it pinchable} if (a) its lifting on $\Omega(G)$
consists of pairwise disjoint simple arcs, and (b) each of these components is
stabilized by a primitive loxodromic transformation in $G$.

The stabilizer of a component of the lifting of a pinchable
loop $\alpha$ is a cyclic group generated by a primitive loxodromic
transformation, and the stabilizer of a component of the lifting of a
pinchable arc is a ${\mathbb Z}/2{\mathbb Z}$-extension of a cyclic group
generated by a primitive loxodromic transformation. We say that such a cyclic
group (or ${\mathbb Z}/2{\mathbb Z}$-extension) is defined by the pinchable
loop $\alpha$.

If $\beta_{1}$ and $\beta_{2}$ are two components of the lifting of a pinchable
$\alpha$, then their stabilizers are conjugate in $G$. 

We say that a
collection $\{\alpha_{1},...,\alpha_{m}\}$ of pairwise disjoint pinchable
loops or arcs is admissible if  they define non-conjugate 
groups in $G$ for $i \neq j$.

\s
\noindent
\begin{rema}\label{obsefuch}
In the particular case that $G$ is a Fuchsian group of the first kind and $\Delta={\mathbb D}$, then every homotopically non-trivial loop in $\Delta/G$ avoiding the branch locus is pinchable. Moreover, every collection ${\mathcal F}$ of pairwise disjoint pinchable loops  is admissible. Also, if $\Gamma$ is a finite index subgroup of $G$ and $Q:{\mathbb D}/\Gamma \to {\mathbb D}/G$ is a branched cover induced by the inclusion $\Gamma < G$, then the collection $Q^{-1}({\mathcal F})$ is admissible for $\Gamma$.

\end{rema}

\s

For each admissible collection ${\mathcal F}=\{\alpha_{1},...,\alpha_{m}\}$ of pinchable
loops on $S$ we define the following equivalence relation on the Riemann sphere
$\widehat{\mathbb C}$. Two points $p, q \in \widehat{\mathbb C}$
are equivalent if either:
\begin{itemize}
\item[(1)] $p=q$; or
\item[(2)] there is a component $\widetilde{\alpha_{j}}$ of the lifting of
some pinchable loop or arc $\alpha_{j}$, such that $p,q \in
\widetilde{\alpha_{j}} \cup \{a,b\}$, where  $a$ and $b$ are the endpoints of
$\widetilde{\alpha_{j}}$ (that is, the fixed points of a primitive
loxodromic transformation in the stabilizer of
$\widetilde{\alpha_{j}}$ in $G$)  
\end{itemize}

The set of equivalence classes for such an equivalence relation is
topologically the Riemann sphere. In fact, let us denote by $\widetilde{\mathcal F}$ the collection of all arcs (including their endpoints), as considered in (2) above, and let us consider the collection of continua given by the collection of arcs in $\widetilde{\mathcal F}$ as points and also each of the points in the complement of $\widetilde{\mathcal F}$. The discreteness of $G$ asserts that such collections of points is a semi-continuous collection of points, as defined in \cite{Moore}, and the result now follows from \cite[Thm2]{Moore}. 

Let us denote by
$P:\widehat{\mathbb C} \to \widehat{\mathbb C}$ the natural continuous
projection defined by the above relation. As a consequence of the results in \cite{M4} we have the following fact.

\s
\noindent
\begin{prop} \label{propo3}
There exists an orientation preserving homeomorphism
$Q:\widehat{\mathbb C} \to \widehat{\mathbb C}$, such that the map
$Q \circ  P:\widehat{\mathbb C} \to \widehat{\mathbb C}$ is a noded quasiconformal
deformation of the group $G$.
\end{prop}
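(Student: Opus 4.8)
The plan is to realize the topological pinching map $P$ as the limit of honest quasiconformal homeomorphisms, and then compose with a single correcting homeomorphism $Q$ so that the resulting map is a genuine noded quasiconformal deformation in the sense of Section \ref{Sec:NodedBeltrami}. First I would set up, for each pinchable loop or arc $\alpha_j$ in the admissible collection ${\mathcal F}$, a standard local model near a component $\widetilde{\alpha_j}$ of its lifting: by conjugating, we may assume the primitive loxodromic generator $\delta_j$ of the stabilizer of $\widetilde{\alpha_j}$ has fixed points $0$ and $\infty$, so that $\widetilde{\alpha_j}$ is (up to isotopy fixing $0,\infty$) a ray, and the quotient cylinder carries the loop $\alpha_j$ as a core curve. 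The classical construction of Maskit \cite{M4} (see also Keen–Series–Maskit \cite{KMS}, Maskit \cite{M2}) produces a path of quasiconformal homeomorphisms $w_t$, $t \in [0,1)$, supported near the $G$-orbit of the $\widetilde{\alpha_j}$'s, equivariant with respect to $G$ in the sense that $w_t \gamma w_t^{-1}$ is again Kleinian (indeed, the Beltrami coefficients $\mu_t$ lie in $L^\infty_1(\Delta,G)$), and such that each $\delta_j$ is conjugated by $w_t$ to a loxodromic element whose translation length tends to $0$ as $t \to 1$; geometrically, the geodesic representing $\alpha_j$ is being pinched.

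Next I would pass to the limit. As $t \to 1$, the Beltrami coefficients $\mu_t$ converge a.e. on their region of discontinuity to a limiting $\mu \in \overline{L^\infty_1(\Delta,G)}$, and the homeomorphisms $w_t$ converge locally uniformly (after the usual normalization fixing three points) to a continuous surjective map $w_\infty : \widehat{\mathbb C} \to \widehat{\mathbb C}$. Using Corollary \ref{Cor1} together with Theorem \ref{Teo1}, the limit $w_\infty$ is a noded quasiconformal deformation of $G$: the admissibility hypothesis on ${\mathcal F}$ (which, by Remark \ref{obsefuch}, is automatic when $G$ is Fuchsian of the first kind and $\Delta = {\mathbb D}$) guarantees that distinct $G$-orbits of the $\widetilde{\alpha_j}$ pinch independently, so that the collection of arcs they sweep out forms a noded family of arcs ${\mathcal F}_\mu$ in the sense of Section \ref{Sec:NodedBeltrami}, and conditions (1)--(3) of part (II) of the definition of a noded Beltrami coefficient are met. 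In particular $w_\infty$ collapses each $\widetilde{\alpha_j}$ (with its endpoints) to a single point, which is the parabolic fixed point of the doubly-cusped element to which $\delta_j$ degenerates.

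Finally I would compare $w_\infty$ with the purely topological projection $P$. Both $w_\infty$ and $P$ collapse exactly the same continua — the arcs of $\widetilde{\mathcal F}$ together with their endpoints — to points, and both are injective off $\widetilde{\mathcal F}$; since the quotient of $\widehat{\mathbb C}$ by this semicontinuous decomposition is again a sphere (by Moore's theorem \cite{Moore}, as already recalled), the map $Q := w_\infty \circ P^{-1}$ is a well-defined orientation-preserving homeomorphism of $\widehat{\mathbb C}$, and by construction $Q \circ P = w_\infty$ is a noded quasiconformal deformation of $G$. This gives the statement. The main obstacle I anticipate is the control of the limiting behavior in the second step: one must verify that the pinching path can be chosen so that the $\mu_t$ genuinely converge a.e. and the $w_t$ converge locally uniformly to a map that is still a homeomorphism on at least one component of $\Omega(\mu)$ and constant on each collapsed arc, with distinct arcs going to distinct points — this is precisely where the detailed estimates of \cite{M4} (and the compactness arguments behind Theorem \ref{Teo1} and Corollary \ref{Cor1}) are needed, and where admissibility of ${\mathcal F}$ is essential to prevent two pinched curves from being forced to the same node.
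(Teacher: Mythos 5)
Your proposal takes essentially the same route as the paper: the paper obtains Proposition \ref{propo3} purely as a consequence of Maskit's pinching results in \cite{M4}, and your sketch---construct Maskit's pinching path $w_t$, pass to the limit $w_\infty$ via Theorem \ref{Teo1} and Corollary \ref{Cor1}, and then define $Q=w_\infty\circ P^{-1}$, well defined and a homeomorphism by Moore's theorem \cite{Moore} since $w_\infty$ and $P$ collapse the same decomposition---is precisely the argument hidden behind that citation. The convergence issues you flag as the main obstacle (a.e.\ convergence of $\mu_t$, local uniform convergence of $w_t$, distinct arcs going to distinct doubly-cusped fixed points) are exactly the estimates the paper also outsources to \cite{M4} and \cite{KMS}, so your account is consistent with, and no less complete than, the paper's own treatment.
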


\s

As a consequence of the above, the geometrically finite Kleinian groups constructed in \cite{KMS} are obtained by noded quasiconformal deformations of the suitable Kleinian groups. The following topological realization was seen in \cite{Hidalgo-Vasiliev}.

\s
\noindent
\begin{prop}[\cite{Hidalgo-Vasiliev}] \label{propo4}
Let $G$ be a Kleinian group and ${\mathcal F}$ be an admissible 
collection of pinchable loops. Denote by 
$P:\widehat{\mathbb C} \to \widehat{\mathbb C}$ the continuous projection naturally  induced by the equivalence relation defined by $\mathcal F$. Then, 
there is a discrete group $\theta(G)$ of orientation preserving homeomorphisms of
the Riemann sphere and there is an isomorphism $\theta:G \to \theta(G)$, such that $P \gamma =
\theta(\gamma) P$, for all $\gamma \in G$.
\end{prop}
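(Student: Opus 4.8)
The plan is to obtain this as a consequence of Proposition \ref{propo3} together with Theorem \ref{Teo1}, so that the purely topological statement gets reduced to the already established theory of noded quasiconformal deformations. First I would apply Proposition \ref{propo3} to the admissible collection $\mathcal F$, which yields an orientation preserving homeomorphism $Q:\widehat{\mathbb C}\to\widehat{\mathbb C}$ such that $w:=Q\circ P$ is a noded quasiconformal deformation of $G$; by definition of such a deformation there is a complex dilatation $\mu\in L^{\infty}_{\mathrm{noded}}(\Delta,G)$ for $w$ (its associated noded family of arcs being the collection $\widetilde{\mathcal F}$ of lifts, with endpoints, of the loops and arcs of $\mathcal F$). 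Applying Theorem \ref{Teo1} to $w$ and $\mu$ then produces a unique Kleinian group $\theta_{0}(G)$ and a unique isomorphism $\theta_{0}:G\to\theta_{0}(G)$ with $w\circ\gamma=\theta_{0}(\gamma)\circ w$ for all $\gamma\in G$.

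Next I would set $\theta(\gamma):=Q^{-1}\theta_{0}(\gamma)Q$ and $\theta(G):=\{\theta(\gamma):\gamma\in G\}$. Each $\theta(\gamma)$ is an orientation preserving homeomorphism of $\widehat{\mathbb C}$, being the conjugate of a M\"obius transformation by an orientation preserving homeomorphism, and from $w=Q\circ P$ one reads off
\[
P\circ\gamma \;=\; Q^{-1}\circ w\circ\gamma \;=\; Q^{-1}\circ\theta_{0}(\gamma)\circ w \;=\; Q^{-1}\theta_{0}(\gamma)Q\circ P \;=\; \theta(\gamma)\circ P ,
\]
which is the asserted intertwining relation. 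The map $\theta$ is a group isomorphism onto $\theta(G)$ since it is the composition of the isomorphism $\theta_{0}$ with conjugation by the fixed homeomorphism $Q$. Finally, $\theta(G)$ is discrete: the identity is isolated in the Kleinian group $\theta_{0}(G)$ for the topology of uniform convergence on the compact space $\widehat{\mathbb C}$, and conjugation by the fixed homeomorphism $Q$ is a self-homeomorphism of the group of homeomorphisms of $\widehat{\mathbb C}$ with its uniform topology, so the same holds for $\theta(G)=Q^{-1}\theta_{0}(G)Q$.

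It is worth noting that one can also argue without the quasiconformal machinery. The equivalence relation defining $P$ is $G$-invariant, since $G$ permutes the lifts of each $\alpha_{j}$ and carries the endpoint pair of a lift onto the endpoint pair of its image (because $\gamma$ conjugates the stabilizer of a lift onto the stabilizer of the image lift); hence every $\gamma\in G$ descends through the quotient map $P:\widehat{\mathbb C}\to\widehat{\mathbb C}$ --- which is a closed, and so quotient, map onto a compact Hausdorff space --- to a homeomorphism $\theta(\gamma)$ with $P\gamma=\theta(\gamma)P$, the homomorphism property and orientation preservation following at once, and injectivity holding because $P\gamma=P$ forces $\gamma$ to fix every point off the countably many lifted arcs, hence more than two points, so $\gamma=\mathrm{id}$. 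In this self-contained route the single delicate point --- and the one I would expect to be the main obstacle --- is the discreteness of $\theta(G)$: one would have to show that $\theta(\gamma_{n})\to\mathrm{id}$ uniformly forces $\gamma_{n}$ to converge to a M\"obius map fixing a large point set and hence, by discreteness of $G$, to be eventually trivial. Since this is exactly the step that comes for free along the Proposition \ref{propo3} route by way of Theorem \ref{Teo1}, that is the route I would adopt.
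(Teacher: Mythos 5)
Your proposal is correct and follows essentially the route the paper itself indicates: the paper cites \cite{Hidalgo-Vasiliev} for Proposition \ref{propo4} and, in the remark immediately following it, points out exactly your mechanism, namely that $\theta$ and $\theta(G)$ arise from Theorem \ref{Teo1} applied to the noded quasiconformal deformation $Q\circ P$ supplied by Proposition \ref{propo3}. Your explicit conjugation $\theta(\gamma)=Q^{-1}\theta_{0}(\gamma)Q$, together with the observation that discreteness transfers under conjugation by the fixed homeomorphism $Q$ in the uniform topology, correctly fills in the details of that indicated argument.
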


\s
\noindent
\begin{rema} 
Results of \cite{M4} and \cite{Ohshika}
assert that, in Proposition \ref{propo4}, if $G$ is geometrically finite, then 
$\theta(G)$ is also geometrically finite. In fact, it can be shown that $\theta$ and $\theta(G)$ are as obtained in Theorem \ref{Teo1}, so the geometrically finiteness also follows from Theorem \ref{Teo2}.
\end{rema}

%%%%%%%%%%%%%%%%%%%%
\subsection{The Noded Teichm\"uller space of $G$ supported in $\Delta$}\label{construccion}

\subsubsection{}
We may extend the Teichm\"uller equivalence relation, given previously on  $L^{\infty}_{1}(\Delta,G)$, to the whole $L^{\infty}_{{\rm noded}}(\Delta,G)$ as follows. Let $\mu,\nu \in L^{\infty}_{{\rm noded}}(\Delta,G)$ and $w_{\mu}$, $w_{\nu}$ be associated noded quasiconformal deformations for $G$, respectively. 
Theorem \ref{Teo1} asserts the existence of
isomorphisms $$\theta_{\mu}:G \to G_{\mu} \quad \mbox{and} \quad 
\theta_{\nu}:G \to G_{\nu}\;,$$ where 
$G_{\mu}$ and $G_{\nu}$ are Kleinian groups 
such that $w_{\mu} g = \theta_{\mu}(g) w_{\mu}$ and 
$w_{\nu} g = \theta_{\nu}(g) w_{\nu}$  for all $g \in G$. 
We say that $\mu$ and $\nu$ are {\it noded Teichm\"uller
equivalent} if there is an orientation preserving homeomorphism 
$A:\widehat{\mathbb C} \to \widehat{\mathbb C}$, such that 
\begin{itemize}
\item[(1)] $A(w_{\mu}(\Omega(\mu)))=w_{\nu}(\Omega(\nu))$;
\item[(2)] $A:w_{\mu}(\Omega(\mu)) \to w_{\nu}(\Omega(\nu))$ is conformal;
\item[(3)] $\theta_{\nu}(g)=A \theta_{\nu}(g) A^{-1}$ for all $g \in G$.
\end{itemize}

If in the above we replace (3) by $\theta_{\nu}(G)=A \theta_{\nu}(G) A^{-1}$, then we will say that $\mu$ and $\nu$ are {\it isomorphic}.

The {\it noded Teichm\"uller space of $G$ supported in $\Delta$}  is the set $NT(\Delta,G)$ of the noded Teichm\"uller equivalence classes of noded Beltrami coefficients
for $G$ supported in $\Delta$. If $\Delta=\Omega(G)$,  then we denote it by $NT(G)$.

\s
\noindent
\begin{rema} If $\mu, \nu \in L^{\infty}_{{\rm noded}}(\Delta,G)$ are noded
Teichm\"uller equivalent and $\mu \in L^{\infty}_{1}(\Delta,G)$, then (1) and
(2) in above definition assert that $\nu \in L^{\infty}_{1}(\Delta,G)$ and
that they are Teichm\"uller equivalent. Moreover,
the inclusion $L^{\infty}_{1}(\Delta,G) \subset L^{\infty}_{{\rm noded}}(\Delta,G)$ induces, under the above equivalence relation, 
the inclusion $T(\Delta,G) \subset NT(\Delta,G)$.
\end{rema}

\s

The modular group $M(\Delta,G)$ extends naturally to act on $NT(\Delta,G)$  
$$M(\Delta,G) \times NT(\Delta,G) \to NT(\Delta,G): ([w],[\mu]) \mapsto [\nu],$$
where $\nu \in L^{\infty}_{noded}(\Delta,G)$ is complex dilation of the noded quasiconformal deformation $w_{\mu} w^{-1}$. The quotient space
$N{\mathcal M}(\Delta,G)=NT(\Delta,G)/M(\Delta,G)$ is the space of isomorphic classes of noded Beltrami coefficients for $G$ supported at $\Delta$. It provides Deligne-Mumford's compactification of the moduli space ${\mathcal M}(\Delta,G)$.

\s
\noindent
\begin{rema} In the setting of representation of groups, $NT(G)$ corresponds to 
consider, in the representation space
Hom($G$,PGL($2,{\mathbb C}$)), the faithful representations of $G$ which are 
geometrically represented by noded quasiconformal deformations. 
This is a natural generalization for the deformation space of $G$,
on which one considers the geometric representations given by
quasiconformal homeomorphisms of $G$.
\end{rema}

\s
%%%%%%%%%%%%%%%%%%%
\subsection{The case of Fuchsian groups: Augmented Teichm\"uller space}\label{Ex5}
Let us assume in here that $G$ is a Fuchsian group of the first kind without parabolic elements, keeping the unit disc $\Delta_{1}={\mathbb D}$ invariant, and set 
$\Delta_{2}=\widehat{\mathbb C}-\overline{\Delta_{1}}$. In this case, $\Omega(G)=\Delta_{1} \cup \Delta_{2}$ and 
the Riemann orbifolds $\Delta_{1}/G$ and $\Delta_{2}/G$ are of signature $(g;n_{1},\stackrel{r}{\ldots},n_{r})$, where $2 \leq n_{1} \leq \cdots \leq n_{r}<\infty$ and 
$n_{1}^{-1}+\cdots+n_{r}^{-1}<2g+r-2$.

Associated to $G$ we have the Teichm\"ullers spaces $T(\Delta_{1},G)$, $T(\Delta_{2},G)$ and $T(G)=T(\Omega(G),G)$. These are 
simply connected complex manifolds, the first two isomorphic and of complex dimensions $3g-3+r$, and the last one of complex dimension $6g-6+2r$. In fact, there is an isomorphism
$$\theta:T(G) \to T(\Delta_{1},G) \times T(\Delta_{2},G): [\mu] \mapsto ([\mu_{1}],[\mu_{2}]),$$
where $\mu_{i}$ is defined as $\mu$ on $\Delta_{i}$ and zero on its complement.

Associated to the above three spaces are the corresponding partial closures given by the noded Teichm\"uller spaces $NT(\Delta_{1},G)$, $NT(\Delta_{2},G)$ and $NT(G)$. Each $NT(\Delta_{j},G)$ can be identified with the Augmented Teichm\"uller
space of $G$ as defined in \cite{Ab1,Ab2,Bers3}. The isomorphism $\theta$ extends continuously to an isomorphism
$$\theta:NT(G) \to NT(\Delta_{1},G) \times NT(\Delta_{2},G).$$

The Teichm\"uller space $T(\Delta_{j},G)$ has associated the Weil-Petersson (WP) metric, which is K\"ahler, has negative sectional curvature, is not complete and for every pair of points there is a unique geodesic connecting them\cite{Ahlfors,Royden,Tromba,Wolpert1,Wolpert2}. It is known that the WP metric completion of $T(\Delta_{j},G)$ is the augmented Teichm\"uller space $NT(\Delta_{j},G)$ \cite{Masur} (but it is a non-locally compact space). 
Results on the geometry of geodesics on these spaces is given in \cite{Wolpert3}. 

The modular group $M(\Delta_{j},G)$ acts a group of WP orientation-preserving isometries (these are all these isometries if $(g,r) \neq (1,2)$ \cite{Wolpert3}). 
It was observed by Masur that $NT(\Delta_{j},G)/M(\Delta_{j},G)$ is the quotient WP metric completion $\overline{\mathcal M}(\Delta_{j},G)$ (this being the Deligne-Mumford's compactification) of the moduli space ${\mathcal M}(\Delta_{j},G)$ (the space of isomorphic classes of Riemann surfaces of genus $g$ with $r$ punctures).

In \cite{HV} it was proved that if $M$ is a finite index subgroup of the modular group $M({\mathbb D},G)$, then the quotient spaces $NT(\Delta_{1},G)/M$ and $NT(\Delta_{2},G)/M$ are compact complex orbifolds. In particular, $NT(\Omega(G)/M$ is also a compact complex orbifold.

\s
\noindent
\begin{rema}
Now, following Remark \ref{obs2} and the above, for every pair $(\Delta,G)$, where $G$ is a finitely generated Kleinian group, and $M$ being of finite index in $M(\Delta,G)$, it holds that $NT(\Delta,G)/M$ is a compact complex orbifold.
\end{rema}

\s

%%%%%%%%%%%%%%%%%%%%
%%%%%%%%%%%%%%%%%%%%
\section{Uniformizations of $(\gamma,n)$-gonal pairs: proof of Theorem \ref{main1}} \label{Sec:uniformizationsofpairs}
Let us consider a $(\gamma,n)$-gonal pair $(S,f)$ and 
let us denote by $B_{f}\subset R$ the locus of branch values of $f$. 

\subsection{The signature of $(S,f)$}
For each point in $B_{f}$ we define its {\it branch order} as the minimum common multiple of the local degrees at all its preimages under $f$. 
Let us write $B_{f}=\{p_{1},\ldots,p_{r}\}$ so that $n_{j}$ is the branch order of $p_{j}$, where $2 \leq n_{1} \leq n_{2} \leq \cdots \leq n_{r}$.
The {\it signature} of $(S,f)$ is then defined by the tuple $(\gamma;n_{1},\ldots,n_{r})$. We assume, from now on, that $(S,f)$ is of {\it hyperbolic type}, that is,  
$n_{1}^{-1}+\cdots+n_{r}^{-1}<2\gamma+r-2$ (in particular, if $\gamma=0$, then $r \geq 3$). If $S$ has genus $g$, then  Riemann-Hurwitz formula asserts the equality
$$2(g-1-n(\gamma-1))=\sum_{j=1}^{r}\left(n-\# f^{-1}(p_{j})\right).$$

Topologically equivalent $(\gamma,n)$-gonal pairs have the same signature, but the converse is in general false.

\subsection{A pair of Riemann orbifolds}
For each point $q \in f^{-1}(p_{j})$ set
$m_{q}=n_{j}/d_{q}$, where $d_{q}$ is the local degree of $f$ at $q$, and let $N_{f}$ be the set of of points $q \in f^{-1}(B_{f})$ with $m_{q}>1$.
Let $R^{orb}$ be the Riemann orbifold whose underlying Riemann surface structure is $R$, its cone point set is $B_{f}$ and the cone order of each $p_{j}$ is $n_{j}$. Similarly, let $S^{orb}$ be the Riemann orbifold whose underlying Riemann surface structure is $S$ and its cone points are given by the points $q \in N_{f}$ with cone order $m_{q}>1$.

\subsection{An uniformizing pair}
As a consequence of the uniformization theorem: (i) there is a co-compact Fuchsian group $G$ acting on the unit disc ${\mathbb D}$ and an orbifold isomorphism $\psi:R^{orb} \to {\mathbb D}/G$; that is, $G$ has signature $(\gamma;n_{1},\ldots,n_{r})$, and (ii) there is an index $n$ subgroup $\Gamma$ and an orbifold isomorphism $\phi:S^{orb} \to {\mathbb D}/\Gamma$  so that $\psi f \phi^{-1}:{\mathbb D}/\Gamma \to {\mathbb D}/G$ is being induced by the inclusion $\Gamma < G$.

The constructed pair of Fuchsian groups $(\Gamma,G)$ is uniquely determined by the pair $(S,f)$, up to conjugation by elements in ${\rm Aut}({\mathbb D})$, the group of holomorphic automorphisms of ${\mathbb D}$; we say that $(\Gamma,G)$ {\it uniformizes} $(S,f)$. The following fact follows almost immediately from the definitions.

\s
\noindent
\begin{lemm}\label{lema2}
Two pairs of Fuchsian groups $(\Gamma_{1},G_{1})$ and $(\Gamma_{2},G_{2})$ uniformize topologically equivalent $(\gamma,n)$-gonal pairs (of hyperbolic type) if and only if there is an orientation-preserving homeomorphism $u:{\mathbb D} \to {\mathbb D}$ so that $u G_{1} u^{-1}=G_{2}$ and $u \Gamma_{1} u^{-1}=\Gamma_{2}$ (as the orbifolds are compact, we may assume $u$ to be a quasiconformal homeomorphism). They correspond to twisted isomorphic pairs if $u \in {\rm Aut}({\mathbb D})$ and to isomorphic ones if $G_{1}=G_{2}$ and $u \in G_{2}$. 
\end{lemm}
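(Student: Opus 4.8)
The plan is to transport the whole statement to the common orbifold universal cover $\mathbb{D}$ and then invoke elementary covering-space theory for hyperbolic $2$-orbifolds; the hyperbolic type hypothesis is what makes this legitimate, since it guarantees that each $S_{i}^{orb}$ and $R_{i}^{orb}$ is uniformized by a Fuchsian group acting on $\mathbb{D}$. A preliminary step I would record first is that, in the definition of topological equivalence, the homeomorphisms $\phi:S_{1}\to S_{2}$ and $\psi:R_{1}\to R_{2}$ with $f_{2}\phi=\psi f_{1}$ are automatically \emph{orbifold} homeomorphisms $S_{1}^{orb}\to S_{2}^{orb}$ and $R_{1}^{orb}\to R_{2}^{orb}$: since $\phi,\psi$ are homeomorphisms and $f_{2}\phi=\psi f_{1}$, the local degree of $f_{1}$ at a point $q$ equals that of $f_{2}$ at $\phi(q)$, so $\psi$ carries $B_{f_{1}}$ onto $B_{f_{2}}$ preserving branch orders and $\phi$ carries $N_{f_{1}}$ onto $N_{f_{2}}$ preserving the integers $m_{q}$. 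Composing with the uniformizing orbifold isomorphisms, I may henceforth regard $\phi$ and $\psi$ as orbifold homeomorphisms $\mathbb{D}/\Gamma_{1}\to\mathbb{D}/\Gamma_{2}$ and $\mathbb{D}/G_{1}\to\mathbb{D}/G_{2}$, still compatible with the branched coverings $\mathbb{D}/\Gamma_{i}\to\mathbb{D}/G_{i}$ induced by the inclusions $\Gamma_{i}<G_{i}$ and realizing the $f_{i}$.

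For the forward implication I would then lift. Let $p_{i}^{S}:\mathbb{D}\to\mathbb{D}/\Gamma_{i}$ and $p_{i}^{R}:\mathbb{D}\to\mathbb{D}/G_{i}$ be the orbifold universal covering maps; because $\Gamma_{i}<G_{i}$ one has $p_{i}^{R}=f_{i}\circ p_{i}^{S}$. By the orbifold lifting criterion, $\phi$ lifts to an orientation-preserving homeomorphism $u:\mathbb{D}\to\mathbb{D}$ with $p_{2}^{S}\circ u=\phi\circ p_{1}^{S}$, and any such lift conjugates the deck groups, $u\Gamma_{1}u^{-1}=\Gamma_{2}$. The crucial point is that the \emph{same} $u$ is automatically a lift of $\psi$: indeed $p_{2}^{R}\circ u=f_{2}\circ p_{2}^{S}\circ u=f_{2}\circ\phi\circ p_{1}^{S}=\psi\circ f_{1}\circ p_{1}^{S}=\psi\circ p_{1}^{R}$, whence $uG_{1}u^{-1}=G_{2}$. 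This yields the homeomorphism required in the statement.

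For the converse, given $u:\mathbb{D}\to\mathbb{D}$ orientation-preserving with $uG_{1}u^{-1}=G_{2}$ and $u\Gamma_{1}u^{-1}=\Gamma_{2}$, the conjugation relations let $u$ descend to orientation-preserving orbifold homeomorphisms $\psi:\mathbb{D}/G_{1}\to\mathbb{D}/G_{2}$ and $\phi:\mathbb{D}/\Gamma_{1}\to\mathbb{D}/\Gamma_{2}$; since $u$ intertwines the covering projections, the square $f_{2}\circ\phi=\psi\circ f_{1}$ commutes, and reading this back through the uniformizing isomorphisms exhibits $(S_{1},f_{1})$ and $(S_{2},f_{2})$ as topologically equivalent. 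The quasiconformal strengthening follows by replacing $u$ with a quasiconformal representative of its isotopy class, available because $\mathbb{D}/G_{i}$ is a closed orbifold so the inclusion of quasiconformal self-maps into homeomorphisms is a bijection on mapping class groups, and the isotopy can be taken to descend to the orbifolds so the conjugation relations persist. The refinements are then immediate: if $u\in{\rm Aut}(\mathbb{D})$ then $\phi,\psi$ descend to biholomorphisms, so the pairs are twisted isomorphic, and conversely a twisted isomorphism makes $\phi,\psi$ biholomorphic, hence their common lift $u$ lies in ${\rm Aut}(\mathbb{D})$; if moreover $G_{1}=G_{2}=:G$ and $u\in G$, then $\psi$ descends to the identity of $\mathbb{D}/G$ while $\phi$ remains a biholomorphism, i.e.\ the pairs are isomorphic, and conversely an isomorphism forces $\psi={\rm id}$, whose lift is a deck transformation $u\in G$ (after normalizing the uniformization of the base orbifold so that the two base groups coincide).

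I expect the only genuinely delicate point to be the lifting step: one must use that a homeomorphism of closed hyperbolic $2$-orbifolds preserving the cone loci and their orders lifts to an honest homeomorphism of $\mathbb{D}$ intertwining the deck groups (standard -- away from the cone points it is ordinary covering theory, and it extends over the cone points because there the covering looks locally like $z\mapsto z^{m}$ and the map preserves the order $m$), and, above all, that the lift of $\phi$ is \emph{forced} to also be a lift of $\psi$ -- which is exactly where the factorization $p_{i}^{R}=f_{i}\circ p_{i}^{S}$ coming from $\Gamma_{i}<G_{i}$ is used. Everything else is bookkeeping with the three equivalences and the uniformizing isomorphisms.
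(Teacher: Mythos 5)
Your argument is correct: the preliminary observation that $\phi,\psi$ are automatically orbifold homeomorphisms, the lifting of $\phi$ to $u$ with $u\Gamma_{1}u^{-1}=\Gamma_{2}$, and the key computation $p_{2}^{R}\circ u=\psi\circ p_{1}^{R}$ forcing $uG_{1}u^{-1}=G_{2}$ (together with the converse descent and the conformal/deck-transformation refinements) is exactly the standard covering-theoretic argument behind this statement. The paper gives no proof at all, asserting only that the lemma ``follows almost immediately from the definitions,'' so your write-up simply supplies the intended details rather than taking a different route.
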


\s
\noindent
\begin{rema}
Let us recall that a holomorphic automorphism of $(S,f)$ is a holomorphic automorphism $T$ of $S$ so that $f=f T$. In this context, 
the group of holomorphic automorphisms of $(S,f)$ is naturally identified with the quotient $N_{G}(\Gamma)/\Gamma$, where $N_{G}(\Gamma)$ is the normalizer of $\Gamma$ in $G$.
\end{rema}

\s

As a consequence of Lemma \ref{lema2} one obtains that  ${\mathcal H}_{0}(S,f)$ can be identified with 
the Teichm\"uller space $T({\mathbb D},G)$ of the orbifold ${\mathbb D}/G$, which is a simply-connected complex manifold of dimension $3\gamma+ r-3$ \cite{K-M,L,N}. 

\s

As every point in ${\mathcal H}(S,f)$ is uniformized by a quasiconformal deformation of $(\Gamma,G)$, is a consequence of the measurable Riemann mapping theorem \cite{A-B,Morrey} that the Hurwitz space ${\mathcal H}(S,f)$ is connected and it can be identified with 
$T({\mathbb D},G)/M(\Gamma,G)$, where $M(\Gamma,G)$ is a subgroup of the modular group $M({\mathbb D},G)$ (the group of holomorphic automorphisms of $T({\mathbb D},G)$) induced by the isotopic classes of those quasiconformal self-homeomorphisms of ${\mathbb D}$ normalizing both $G$ and also $\Gamma$. The above, in particular, asserts that ${\mathcal H}(S,f)$ is a connected complex orbifold of dimension $3\gamma+r-3$ whose orbifold fundamental group is isomorphic to $M(\Gamma,G)$. 

\s
\noindent
\begin{rema}
As the subgroup $M(\Gamma,G)$ has finite index in the modular group $M({\mathbb D},G)$, there is a finite degree branched covering ${\mathcal H}(S,f) \to {\mathcal M}({\mathbb D},G)$, where
${\mathcal M}({\mathbb D},G)$ is the moduli space of the orbifold ${\mathbb D}/G$ (this being the moduli space of an $r$-punctured Riemann surface of genus $\gamma$). For instance, if $\Gamma$ is a characteristic subgroup of $G$, then $M(\Gamma,G)=M({\mathbb D},G)$, so  ${\mathcal H}(S,f)={\mathcal M}({\mathbb D},G)$.
\end{rema}

\s

%%%%%%%%%%%%%%%%%%%%
%%%%%%%%%%%%%%%%%%%%
\section{Uniformizations of stable $(\gamma,n)$-gonal pairs: Proof of Theorem \ref{main2}}\label{Sec:nodedpairs}
In this section we proceed to construct uniformizations of an stable $(\gamma,n)$-gonal pair  using noded Fuchsian groups. These uniformization permits to provide a compactification $\overline{\mathcal H}(S,f)$, for $(S,f)$ a $(\gamma,n)$-gonal pair of hyperbolic type, this being a complex orbifold containing ${\mathcal H}(S,f)$ as an open dense suborbifold.

\subsection{}
Let us recall that each stable $(\gamma,n)$-gonal pair $(S^{*},f^{*})$ is obtained by considering a suitable $(\gamma,n)$-gonal pair $(S,f:S \to R)$ of hyperbolic type and a suitable collection ${\mathcal F}=\{\gamma_{1}, \ldots, \gamma_{s} \}$ of pairwise disjoint simple loops in $R-B_{f}$, where $B_{f}$ is the set of branch values of $f$. If $r$ is the cardinality of $B_{f}$, then for $\gamma=0$ we also assume that $r \geq 4$ (otherwise, there is no possible degeneration to make). 

The collection ${\mathcal F}$ satisfies that the Euler characteristic of each connected component of $R-(B_{f} \cup \gamma_{1}\cup \cdots \cup \gamma_{s})$ is negative and none of the components of $R-(\gamma_{1} \cup \cdots \cup \gamma_{s})$ is a disc with only two branch values, both with branch order equal to $2$.  

By the pinching process of the loops in ${\mathcal F}$ and those in $f^{-1}({\mathcal F})$ (as indicated in the introduction), we obtain a topological stable $(\gamma,n)$-gonal pair homeomorphic to $(S^{*},f^{*})$. To obtain $(S^{*},f^{*})$ we need to provide to each component  of the nodes of a suitable analytically finite Riemann surface.

\subsection{}
Let us consider a pair $(\Gamma,G)$ of Fuchsian groups uniformizing $(S,f)$, that is, there are orbifold isomorphisms 
$$\phi:S^{orb} \to {\mathbb D}/\Gamma, \quad \psi:R^{orb} \to {\mathbb D}/G$$
so that $ \pi:=\psi  f \phi^{-1}$ is branched cover induced by the inclusion of $\Gamma$ in $G$. Recall that the pair $(\Gamma,G)$ is unique up to conjugation by elements of ${\rm Aut}({\mathbb D})$.

\subsection{}
The collection of simple loops $\psi({\mathcal F}) \subset {\mathbb D}/G$ lifts under $\pi$ to a collection ${\mathcal G}$ of simple arcs in ${\mathbb D}$ which is pinchable for $G$, so also for the subgroup $\Gamma$ (see Remark \ref{obsefuch}).

\subsection{}
Set $\overline{\mathbb D}=\{z \in {\mathbb C}: |z|>1\} \cup \{\infty\}$ and let us consider three different points $a,b,c \in \overline{\mathbb D}$.

\subsection{}
Now, as a consequence of Propositions \ref{propo3} and \ref{propo4}, there is a $ \mu \in L^{\infty}_{noded}({\mathbb D},G)$ with limit set being the closure of the collection ${\mathcal G}$. If $w_{\mu}:\widehat{\mathbb C} \to \widehat{\mathbb C}$ is a noded quasiconformal deformation for $G$ with the complex dilation $\mu$ and fixing the points $a,b,c$, then Theorem \ref{Teo1} asserts the existence of a unique Kleinian group $\theta(G)$ and  a unique isomorphism of groups $\theta:G \to \theta(G)$ such
that $w_{\mu} \circ \gamma = \theta(\gamma) \circ w_{\mu}$.  Moreover, the region of discontinuity for the action of 
$\theta(G)$ on the Riemann sphere is $w_{\mu}(\Omega(\mu) \cap \Omega(G))=w_{\mu}(\Omega(\mu) \cap {\mathbb D}) \cup w_{\mu}(\overline{\mathbb D})$. Observe that $w_{\mu}(\overline{\mathbb D})$ is a quasidisc.

Corollary \ref{coro4} asserts that $\theta(G)$ (and so $\theta(\Gamma)$) is a noded Fuchsian group. By Proposition \ref{propo3}, the pair $(\theta(\Gamma),\theta(G))$, when restricted to $w_{\mu}(\Omega(\mu) \cap {\mathbb D})$, provides a stable $(\gamma,n)$-gonal pair modelled by $(S^{*},f^{*})$. Also, the same pair, when restricted to $w_{\mu}(\overline{\mathbb D})$ provides the $(\gamma,n)$-gonal pair $(S,f)$.

Up to post-composition by a suitable quasiconformal homeomorphisms, we may assume that the stable $(\gamma,n)$-gonal pair we obtain is the original one, i.e., $(S^{*},f^{*})$; 
we say that the pair $(\theta(\Gamma),\theta(G))$ {\it uniformizes} it.

\subsection{}
For $t \in {\mathbb D}$, we may consider $t\mu \in  L^{\infty}_{1}({\mathbb D},G)$ and the quasiconformal homeomorphism 
$w_{t\mu}:\widehat{\mathbb C} \to \widehat{\mathbb C}$ with complex dilation $t\mu$ and fixing the points $a,b,c$. If $G_{t}=w_{t\mu} G w_{t\mu}^{-1}$ and 
$\Gamma_{t}=w_{t\mu} \Gamma w_{t\mu}^{-1}$, then $(\Gamma_{t},G_{t})$ provides a continuous family of $(\gamma,n)$-gonal pairs $(S_{t},f_{t})$ converging to $(S^{*},f^{*})$.

\subsection{}
The above construction asserts that the Hurwitz space $\overline{\mathcal H}_{0}(S,f)$, parametrizing isomorphic classes of stable $(\gamma,n)$-gonal pairs, modelled by degeneration of a the $(\gamma,n)$-gonal pair $(S,f)$, can be identified with the noded Teichm\"uller space $NT({\mathbb D},G)$. 

Similarly, the Hurwitz space $\overline{\mathcal H}(S,f)$, parametrizing twisted isomorphic classes of stable $(\gamma,n)$-gonal pairs, modelled by degeneration of a the $(\gamma,n)$-gonal pair $(S,f)$, can be identified with the quotient space $NT({\mathbb D},G)/M(\Gamma,G)$. As $M(\Gamma,G)$ has finite index in the modular group $M({\mathbb D},G)$, it follows from the results of  \cite{HV} that $NT({\mathbb D},G)/M(\Gamma,G)$ carries a structure of a complex orbifold, being a finite branched cover of the Deligne-Mumford's compactification space of ${\mathbb D}/G$.

\s
\noindent
\begin{rema}
The forgetful map (of finite degree) 
$${\mathcal H}(S,f) \to {\mathcal M}_{g}: [(S',f')] \mapsto [S']$$ extends to the corresponding forgetful map (also of finite degree) 
$$\overline{\mathcal H}(S,f) \to \widehat{\mathcal M}_{g}: [(S',f')] \mapsto [S']$$
\end{rema}

\s

%%%%%%%%%%%%%%%%%%
%%%%%%%%%%%%%%%%%%

\end{document}